\documentclass{aimsarx}
\usepackage{amsmath}
  \usepackage{paralist}
  \usepackage{graphics} 
  \usepackage{epsfig} 
\usepackage{graphicx}  \usepackage{epstopdf}
 \usepackage[colorlinks=true]{hyperref}
\hypersetup{urlcolor=blue, citecolor=red}

  \textheight=8.2 true in
   \textwidth=5.0 true in
    \topmargin 30pt
     \setcounter{page}{1}


   \usepackage{graphicx}
\usepackage{setspace} 
\usepackage{amssymb, pifont, color, graphics, amsmath, amssymb, mathrsfs, amsfonts, multicol, amsmath, tikz, pgfplots, graphicx, lmodern,footnote} 
\usepackage{tikz}
\usepackage[ruled,vlined]{algorithm2e}
\usepackage{hyperref,stackengine}
\hypersetup{
    colorlinks=true,
    linkcolor=blue,
    filecolor=red,      
    urlcolor=red,
}

\usepackage{mathtools}
\usepackage{esint}
\usepackage{lineno}

\makeatletter
\newcommand{\leqnomode}{\tagsleft@true\let\veqno\@@leqno}
\newcommand{\reqnomode}{\tagsleft@false\let\veqno\@@eqno}
\makeatother
\usepackage[utf8]{inputenc}

\DeclareMathOperator*{\du}{d\!}
\DeclareMathOperator*{\Div}{div}

\DeclareMathOperator{\bu}{\boldsymbol{u}}
\DeclareMathOperator{\blf}{\boldsymbol{f}}

\DeclareMathOperator{\bv}{\boldsymbol{v}}

\DeclareMathOperator{\bH}{\mathbf{H}}

\DeclareMathOperator{\bTh}{\mathbf{\Theta}}
\DeclareMathOperator{\bth}{\boldsymbol{\theta}}
\DeclareMathOperator{\bn}{\boldsymbol{n}}
\DeclareMathOperator{\bN}{\boldsymbol{N}}
\DeclareMathOperator{\bg}{\boldsymbol{g}}

\DeclareMathOperator{\bphi}{\boldsymbol{\varphi}}

\DeclareMathOperator{\btau}{\boldsymbol{\tau}}
\DeclareMathOperator{\dive}{\mathrm{div}}

\DeclareMathOperator{\Oad}{\mathcal{O}_{ad}}
\DeclareMathOperator{\ws}{\mathrel{\ensurestackMath{\stackon[1pt]{\rightharpoonup}{\scriptstyle\ast}}}}

\newcommand\irregularcircle[2]{
  \pgfextra {\pgfmathsetmacro\len{(#1)+rand*(#2)}}
  +(0:\len pt)
  \foreach \a in {10,20,...,350}{
    \pgfextra {\pgfmathsetmacro\len{(#1)+rand*(#2)}}
    -- +(\a:\len pt)
  } -- cycle
}

\makeatletter

\setlength\normalparindent{\parindent}
\newenvironment{pethau}%
{\begin{list}{}%
    {%
      \setlength{\itemindent}{-10pt}%
      \setlength{\leftmargin}{20pt}%
      \setlength{\labelwidth}{.3\normalparindent}%
      \addtolength{\topsep}{-0.5\parskip}%
      \listparindent \normalparindent
      \setlength{\parsep}{\parskip}}}%
  {\end{list}}
\makeatother

\makesavenoteenv{tabular}
\makesavenoteenv{table}


\newtheorem{theorem}{Theorem}[section]

\newtheorem{lemma}[theorem]{Lemma}
\newtheorem{proposition}{Proposition}

\theoremstyle{definition}
\newtheorem{definition}[theorem]{Definition}
\newtheorem{remark}{Remark}
\newtheorem*{notation}{Notation}

\title[Maximizing Vortex via a Shape Optimization Problem] 
      {A Shape Optimization Problem Constrained with the Stokes Equations to Address Maximization of Vortices}

\author[John Sebastian Simon and Hirofumi Notsu]{}

\subjclass{Primary: 49Q10,76D55, 35Q93; Secondary: 76U99}
 \keywords{Shape optimization, Stokes equations, augmented Lagrangian, Rearrangment method.}

 \email{john.simon@stu.kanazawa-u.ac.jp}
 \email{notsu@se.kanazawa-u.se.ac}

\thanks{This work is supported by JSPS KAKENHI Grant Numbers JP18H01135, JP20H01823, JP20KK0058 and JP21H04431, and JST CREST Grant Number JPMJCR2014 for {\bf HN}; and by the Japanese Government (MEXT) Scholarship for {\bf JSS}}

\thanks{$^*$ Corresponding author: john.simon@stu.kanazawa-u.ac.jp}

\begin{document}
\maketitle

\centerline{\scshape John Sebastian H. Simon$^*$}
\medskip
{\footnotesize
 \centerline{Graduate School of Natural Science and Technology}
   \centerline{Kanazawa University}
   \centerline{Kanazawa 920-1192, Japan}
} 

\medskip

\centerline{\scshape Hirofumi Notsu}
\medskip
{\footnotesize
 \centerline{ Faculty of Mathematics and Physics}
   \centerline{Kanazawa University}
   \centerline{Kanazawa 920-1192, Japan}
}

\bigskip


\begin{abstract}
We study an optimization problem that aims to determine the shape of an obstacle that is submerged in a fluid governed by the Stokes equations.  The mentioned flow takes place in a channel, which motivated the imposition of a Poiseuille-like input function on one end and a do-nothing boundary condition on the other. 
The maximization of the vorticity is addressed by the $L^2$-norm of the curl and the {\it det-grad} measure of the fluid. 
We impose a Tikhonov regularization in the form of a perimeter functional and a volume constraint to address the possibility of topological change. 
Having been able to establish the existence of an optimal shape, the first order necessary condition was formulated by utilizing the so-called rearrangement method.
Finally, numerical examples are presented by utilizing a finite element method on the governing states, and a gradient descent method for the deformation of the domain. 
On the said gradient descent method, we use two approaches to address the volume constraint: one is by utilizing the augmented Lagrangian method; and the other one is by utilizing a class of divergence-free deformation fields.
\end{abstract}

\section*{Introduction}

The study of fluid dynamics is among the most active areas in mathematics,  physics, engineering, and most recently in information theory \cite{goto2021}. An interesting area in this field is the study of turbulent flows and on controlling the emergence of such phenomena \cite{desai1994,posta2007,flinois2015}.  

Even though turbulence represents chaos, harnessing it in a controlled manner can be useful, for instance, in optimal mixing problems. Among these studies is \cite{eggl2020} where an optimal stirring strategy to maximize mixing is studied, while in \cite{mathew2007} the authors studied the best type of input functions that will provide a better mixing in the fluid.  

Recently, Goto, K.  et.al.\cite{goto2021} utilized the generation of vortices in the fluid for information processing tasks. In the said literature, the authors found out that the length of the twin-vortex affects the memory capacity in the context of physical reservoir computing.  

For these reasons, the current paper is dedicated to studying a shape optimization problem intended to maximize the turbulence of a flow governed by the two-dimensional Stokes equations. Here, the shape of an obstacle submerged in a fluid -- that flows through a channel -- is determined to maximize vorticity. Furthermore, the vorticity is quantified in two ways: one is by the curl of the velocity of the fluid; and the other is by rewarding the unfolding of complex eigenvalues of the gradient tensor of the velocity field.

To be precise, our goal is to study the following shape optimization problem
\begin{equation}
\min\{\mathcal{G}({\bu},\Omega); |\Omega| = m, \Omega\subset D \},
\label{shapeopti}
\end{equation}
where $D$ is a hold-all domain which is assumed to be a fixed bounded connected domain in $\mathbb{R}^2$, $\Omega\subset D$ is an open bounded domain, $\mathcal{G}({\bu},\Omega) : = J({\bu},\Omega) + \alpha P(\Omega),$ $J$ and $P$ are vortex and perimeter functionals given by 
\[J({\bu},\Omega) = -\int_{\Omega} \frac{\gamma_1}{2}|\nabla\times {\bu}|^2 + \gamma_2 h(\mathrm{det}(\nabla {\bu})) \du x,\ P(\Omega) = \int_{\Gamma_{\rm f}}\du s,\]
 $h$ is such that \[ h(t) = \left\{\begin{matrix} 0&\text{if } t\le 0,\\ t^3/(t^2+1)&\text{if }t>0, \end{matrix}\right. \]
$|\Omega| := \int_{\Omega}\du x$, and $\alpha,\gamma_1,\gamma_2$ and $m$ are positive constants. Here, ${\bu}$ is the velocity field governed by a fluid flowing through a channel with an obstacle (see Figure \ref{fig1} for reference). The flow of the fluid is reinforced by a divergence-free input function $\bg$ on the left end of the channel, which is an inflow boundary denoted by $\Gamma_{\rm{in}}$, whilst an outflow boundary condition is imposed to the fluid on the right end of the channel denoted by $\Gamma_{\rm{out}}$. The boundary $\Gamma_{\rm f}$  is the free surface and is the boundary of the submerged obstacle in the fluid. The remaining boundaries of the channel are wall boundaries denoted by $\Gamma_{\rm{w}}$, upon which -- together with the obstacle boundary $\Gamma_{\rm f}$ -- a no-slip boundary condition is imposed on the fluid. We refer the reader to \cite{gao2008,haslinger2005,iwata2010,pironneau2010,Schmidt2010} for shape optimization problems involving fluids among others.

\begin{figure}[h]
\centering
\beginpgfgraphicnamed{flow}
\begin{tikzpicture}
	  \coordinate (c) at (1.75,2);
      \draw[black, fill = gray, fill opacity = 0.5, semithick, even odd rule]
            (0,0) rectangle (8,4) (c) \irregularcircle{.6cm}{.3mm};
       \draw (-0.25,2) node{$\Gamma_{\rm{in}}$}  (2.3,2.65) node {$\Gamma_{\rm{f}}$} (7.5,2) node{$\Gamma_{\rm{out}}$} (3,4.2) node {$\Gamma_{\rm{w}}$} (3,-.25) node{$\Gamma_{\rm{w}}$} (6,3.5) node{${\Omega}$};
       \foreach \y in {.25cm, .5cm, .75, 1cm, 1.25cm, 1.5cm, 1.75 ,2cm, 2.25cm, 2.5cm, 2.75 , 3cm, 3.25cm, 3.5cm, 3.75 }
		\draw[->] (0pt,\y) -- (20pt,\y);
\end{tikzpicture}
\endpgfgraphicnamed
\caption{Set up of the domain.}
\label{fig1}
\end{figure}
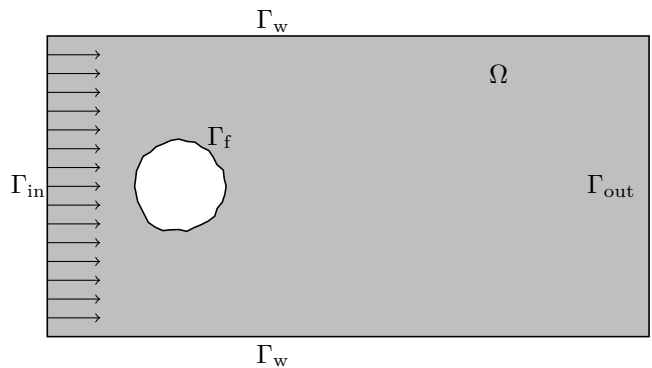
 
We also point out that usually, the curl $\nabla\times{\bu}$ is enough to quantify the vorticity of the fluid. However, as pointed out by Kasumba, K. and Kunisch, K. in \cite{kasumba2012}, the magnitude of such quantity may still be high on laminar flows.  As such, on the same literature the authors proposed the second term in the vortex functional for the quantification of rotation of fluids.  The impetus is that the rotational cores of fluids mostly occur near regions where the eigenvalues of $\nabla{\bu}$ are complex \cite{chong1990,hunt1988,jeong1995},  and that the eigenvalues are complex when $\mathrm{det}\nabla{\bu} > 0$.

{\bf Convention:} When we talk about a domain $\Omega$, we always consider it having the boundary $\partial\Omega = \overline{\Gamma}_{\rm in}\cup\overline{\Gamma}_{\rm w}\cup\overline{\Gamma}_{\rm out}\cup\overline{\Gamma}_{\rm f}$ with $\mathrm{meas}(\Gamma_{i}\cap\Gamma_j)=0$ for $i,j\in\{{\rm in,w,out,f }\}, i\ne j$, and that $\mathrm{dist}(\partial\Omega\backslash\Gamma_{\rm f},\Gamma_{\rm f}) > 0$.

One of the challenges in this problem is the non-convexity of the functional $J$, which leads into the possible non-existence of solutions for the shape optimization problem. Fortunately, we will be able to show later on that the functional $J$ is continuous with respect to domain variations. Furthermore, the Tikhonov regularization in the form of the perimeter functional $P$ is taken into consideration to circumvent the issue of possible topological changes of the domain. 

Meanwhile, for the equality constraint $|\Omega| = m$, we note that the use of an augmented Lagrangian generates smoother solutions as compared to the common Lagrangian. This is due to the regularizing effect of the quadratic augmentation. As a further consequence, this quadratic term also acts as a penalizing term for the violation of the equality constraint. Aside from the augmented Lagrangian, we also propose an analogue of the $H^1$-gradient method that satisfies the incompressibility condition to preserve the volume on each iterate of the gradient-descent method.

This paper is organized as follows: in the next section, the system that governs the fluid and the functional spaces to be used for the analysis will be introduced. The variational formulation of the governing system and the existence of its solution will also be shown in the next section.
Section \ref{sec 3} is dedicated to the analysis of the existence of an optimal shape. Here, we employ the $L^\infty$-topology of characteristic functions for convergence of deformed domains, to ensure the volume preservation.  

We shall derive the necessary condition for the optimization problem in Section \ref{sec 4}. This is done by investigating the sensitivity of the objective functional $\mathcal{G}$ with respect to shape variations. In particular, the shape variations are induced by the velocity method \cite{sokolowski1992}, and the shape sensitivity is analyzed using the rearrangement method which was formalized by Ito, K., Kunisch, K., and Peichl, G. in \cite{ito2008}.  In Section \ref{sec 5}, we provide numerical algorithms through the help of the necessary conditions formulated in Section \ref{sec 4}, and provide some illustrations of how these algorithms are implemented. To objectively observe the effects of the final shapes to fluid vortices, we look at the generation of twin-vortices in a dynamic nonlinear fluid flow. Finally, we draw conclusions and possible problems on the last section.

\section{Preliminaries}\label{sect2}
\subsection{Governing equations and necessary functional spaces}
Let $\Omega\subset D$ be open, bounded and of class $C^{1,1}$, the motion of the fluid is described by its velocity ${\bu}$ and pressure $p$ which satisfies the stationary Stokes equations given by:
\begin{align}
	\left\{
	\begin{aligned}
	\, - \nu\Delta{\bu} +  \nabla p
	&= \blf && \text{ in } \Omega,\\	
	\, \Div{\bu} &= 0 && \text{ in } \Omega,\\ 		
	\, {\bu} & = \bg && \text{ on }\Gamma_{\rm in},\\
	\, {\bu} &= 0 && \text{ on } \Gamma_{\rm{f}}\cup\Gamma_{\rm w},\\
	\, -p\bn + \nu \partial_{\bn\!}{\bu} & = 0 &&\text{ on }\Gamma_{\rm out},
	\end{aligned} \right.
	\label{state}
\end{align}
where $\nu>0$ is a constant, $\blf$ is an external force, $\bg$ is a divergence-free input function acting on the boundary $\Gamma_{\rm in}$, $\bn$ is the outward unit normal vector on $\partial\Omega$, and $\partial_{\bn}:= \bn\cdot\nabla$ is the $\bn$-directional derivative on $\partial\Omega$.
The condition ${\bu}= 0$ on $\Gamma_{\rm{f}}\cup \Gamma_{\rm w}$ corresponds to no-slip boundary condition, while the Neumann condition on $\Gamma_{\rm out}$ is the so-called {\it do-nothing} boundary condition.

The analysis will take place in the Sobolev spaces $W^{k,p}(D)$ for $D\subset\mathbb{R}^2$, $k\ge 0$ and $p\ge 1$. Note that if $p=2$, then $H^k(D)=W^{k,2}(D)$ and $H^0(D)=L^2(D)$. 
For any domain $\Omega\subset\mathbb{R}^2$, we define 
\[\mathcal{W}(\Omega) = \{\boldsymbol{\varphi} \in C^\infty(\Omega)^2 : \boldsymbol\varphi=0\text{ on a neighborhood of }\Gamma_0:=\partial \Omega\backslash \Gamma_{\rm out} \}.\]
We denote by $\bH_{\Gamma_0}^r(\Omega)$ the closure of $\mathcal{W}(\Omega)$ with respect to the space $H^r(\Omega)^2$.

We also consider the following bilinear forms $a(\cdot,\cdot)_\Omega:H^2(\Omega)^2\times H^2(\Omega)^2\to\mathbb{R}$ and $b(\cdot,\cdot)_\Omega:H^2(\Omega)^2\times L^2(\Omega)\to\mathbb{R}$ defined by
\begin{align*}
	a({\bu},\bv)_\Omega = \int_\Omega \nabla{\bu}:\nabla\bv \du x,\ b(\bv,q)_\Omega = -\int_\Omega q\dive\bv\du x.
\end{align*}
Furthermore, for any measurable set $\mathcal{D}\subset \mathbb{R}^d$ ($d=1,2$) we shall denote $(\cdot,\cdot)_\mathcal{D}$ as the $L^2(\mathcal{D})$, $L^2(\mathcal{D})^2$ or $L^2(\mathcal{D})^{2\times2}$ inner product.

\subsection{Weak formulation and existence of solutions}

Let $\bg\in H^2(D)^2$ satisfy the following properties:
\begin{align}
	\left\{
	\begin{aligned}
	&\Div \bg = 0\text{ in }\Omega;\quad \bg = 0\text{ on }\Gamma_{\rm f}\cup\Gamma_{\rm wall};\\
	&\int_{\Gamma_{\rm out}}\bg\cdot\bn \du s = -\int_{\Gamma_{\rm in}}\bg\cdot\bn \du s \ge 0,
	\end{aligned}
		\right.\label{gprop}
\end{align}
and $\blf\in L^2(D)^2$.  The existence of the function $\bg\in H^2(D)^2$ that satisfies \eqref{gprop}, can be easily established by utilizing for instance \cite[Lemma 2.2]{girault1986}.

By letting $\tilde{\bu} = {\bu} - \bg$, we consider the  variational form of the state equation \eqref{state} given by: For a given domain $\Omega$, find $(\tilde{\bu},p)\in \bH_{\Gamma_0}^1(\Omega)\times L^2(\Omega)$ that satisfies, for any $(\bphi,q)\in  \bH_{\Gamma_0}^1(\Omega)\times L^2(\Omega)$, the following equations
\begin{equation}
\left\{
\begin{aligned}
  \nu a(\tilde{\bu},\bphi)_\Omega + b(\bphi,p)_\Omega & = (\blf,\bphi)_\Omega - \nu(\nabla\bg,\nabla\bphi)_{\Omega}\\
  b(\tilde{\bu},q)_\Omega & = 0.
 \end{aligned}
  \right.
\label{weak}
\end{equation}

Any pair $(\tilde{\bu},p)\in \bH_{\Gamma_0}^1(\Omega)\times L^2(\Omega)$ that solves the variational equation \eqref{weak} is said to be a \textit{weak solution} to the Stokes equation \eqref{state}.  
The existence of the solution $\tilde{\bu}$, is summarized below.
\begin{theorem}
Let $\Omega$ be of class $C^{1,1},$ $\blf\in L^2(D)^2$, and $\bg\in H^2(D)^2$ satisfy \eqref{gprop}. 
The solution $(\tilde{\bu},p)\in \bH_{\Gamma_0}^1(\Omega)\times L^2(\Omega)$ to the variational problem \eqref{weak} exists such that 
\begin{equation}
\|\tilde{\bu}\|_{\bH_{\Gamma_{0}}^1(\Omega)} + \|p\|_{L^2(\Omega)} \le c(\|\blf\|_{L^2(\Omega)^2} +\|\bg\|_{H^{1}(\Omega)^2} ).
\label{energy}
\end{equation}
for some constant \(c>0\). 
\label{th:wp}
\end{theorem}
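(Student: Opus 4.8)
The plan is to read \eqref{weak} as a linear saddle-point problem and to invoke the Babuška--Brezzi (LBB) theory. Writing $\bV := \bH_{\Gamma_0}^1(\Omega)$ and $Q := L^2(\Omega)$, the system has the abstract form: find $(\tilde{\bu},p)\in\bV\times Q$ such that $\nu a(\tilde{\bu},\bphi)_\Omega + b(\bphi,p)_\Omega = F(\bphi)$ and $b(\tilde{\bu},q)_\Omega=0$ for all $(\bphi,q)\in\bV\times Q$, where the load functional is $F(\bphi):=(\blf,\bphi)_\Omega-\nu(\nabla\bg,\nabla\bphi)_\Omega$. To apply Brezzi's theorem I would verify, in order, the continuity of $a$ and $b$, the coercivity of $a$ on $\bV$ (hence in particular on the kernel of $b$), the inf--sup condition for $b$, and the continuity of $F$; existence, uniqueness, and the desired bound then follow from the standard quantitative form of the theorem. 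An equivalent route would be to first restrict to the divergence-free subspace and solve for $\tilde{\bu}$ by Lax--Milgram, then recover $p$ from the inf--sup condition via the closed range theorem; the ingredients are identical.

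Continuity is immediate from Cauchy--Schwarz: $|a(\tilde{\bu},\bphi)_\Omega|\le\|\nabla\tilde{\bu}\|_{L^2}\|\nabla\bphi\|_{L^2}$ and $|b(\bphi,q)_\Omega|\le\|\dive\bphi\|_{L^2}\|q\|_{L^2}\le\sqrt{2}\,\|\bphi\|_{H^1}\|q\|_{L^2}$. For coercivity I would use that every $\bphi\in\bV$ vanishes on $\Gamma_0=\partial\Omega\setminus\Gamma_{\rm out}$, a portion of the boundary of positive surface measure; the Poincaré--Friedrichs inequality then gives $\|\bphi\|_{L^2}\le C_P\|\nabla\bphi\|_{L^2}$, so that $a(\bphi,\bphi)_\Omega=\|\nabla\bphi\|_{L^2}^2\ge c_0\|\bphi\|_{H^1}^2$ with $c_0=(1+C_P^2)^{-1}$. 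The functional $F$ is likewise bounded, $|F(\bphi)|\le(\|\blf\|_{L^2}+\nu\|\nabla\bg\|_{L^2})\|\bphi\|_{H^1}$, whence $\|F\|_{\bV'}\le c(\|\blf\|_{L^2(\Omega)^2}+\|\bg\|_{H^1(\Omega)^2})$, which is precisely the quantity on the right-hand side of \eqref{energy}.

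The main obstacle is the inf--sup condition $\inf_{q\in Q}\sup_{\bphi\in\bV} b(\bphi,q)_\Omega/(\|\bphi\|_{H^1}\|q\|_{L^2})\ge\beta>0$, equivalently the surjectivity of $\dive:\bV\to Q$ with a bounded right inverse. Here I would stress the role of the do-nothing condition: unlike the pure no-slip Stokes problem, where the pressure is fixed only up to a constant and $Q$ must be restricted to the mean-zero subspace, the test functions in $\bV$ need not vanish on $\Gamma_{\rm out}$, so that $\int_\Omega\dive\bphi = \int_{\Gamma_{\rm out}}\bphi\cdot\bn$ may take any value and the mean-zero obstruction disappears. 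Consequently, for a given $q\in L^2(\Omega)$ one can construct $\bphi\in\bV$ with $\dive\bphi=q$ and $\|\bphi\|_{H^1}\le C\|q\|_{L^2}$ by solving an auxiliary divergence problem on the $C^{1,1}$ domain $\Omega$, a construction furnished by the classical theory of the divergence operator (see \cite{girault1986}); I would invoke this rather than reprove it. With coercivity and the inf--sup bound established, Brezzi's theorem yields a unique $(\tilde{\bu},p)\in\bV\times Q$ together with the estimate $\|\tilde{\bu}\|_{\bV}+\|p\|_{Q}\le C(\nu,c_0,\beta)\,\|F\|_{\bV'}$, and combining this with the bound on $\|F\|_{\bV'}$ from the previous paragraph gives \eqref{energy}.
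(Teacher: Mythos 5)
Your proposal is correct and follows essentially the same route as the paper, which proves Theorem \ref{th:wp} by exactly these ingredients: coercivity of $a(\cdot,\cdot)_\Omega$, the inf--sup condition for $b(\cdot,\cdot)_\Omega$ (citing \cite{girault1986}), and boundedness of the right-hand side as a functional on $\bH_{\Gamma_0}^1(\Omega)$, combined via standard saddle-point theory. Your write-up simply fills in the details the paper leaves implicit (Poincar\'e--Friedrichs for coercivity, and the observation that the do-nothing condition on $\Gamma_{\rm out}$ removes the mean-zero restriction on the pressure space), both of which are accurate.
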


The proof of the theorem above can be done by utilizing the fact that the operator $a(\cdot,\cdot)_\Omega$ is coercive, $b(\cdot,\cdot)_\Omega$ satisfies the inf-sup condition \cite{girault1986}, and that the right hand side of the first equation of \eqref{weak} can be seen as an action of an element of $\bH^{-1}(\Omega)$.


\begin{remark}
\noindent(i) The energy estimate can be extended to the hold-all domain $D$, i.e.,
\[\|\tilde{\bu}\|_{\bH_{\Gamma_{0}}^1(\Omega)} + \|p\|_{L^2(\Omega)} \le c(\|\blf\|_{L^2(D)^2} +\|\bg\|_{H^{1}(D)^2}) ,\]
where $c>0$ is dependent on $D$ but not on $\Omega$.
\noindent(ii) As expected,  a more regular domain yields a more regular solution. In particular, if $\Omega$ is of class $C^{k,1}$ for $k \ge 0$ then the weak solution to \eqref{weak} satisfies ${\bu}\in \bH_{\Gamma_0}^1(\Omega)\cap\,H^{k+1}(\Omega)^2$, and as a consequence of the Rellich-Kondrachov embedding theorem \cite[Chapter~5, Theorem~6]{evans1998} the solution is in $C(\overline{\Omega})^2$.  Furthermore, if we instead assume that the outer boundary covers a convex polygonal domain while the inner boundary $\Gamma_{\rm f}$ is of class $C^{1,1}$, the same regularity of the solution is obtained when $k=1$.


\end{remark}

\section{Existence of optimal shapes}\label{sec 3}

\subsection{Cone property and the set of admissible domains}

We begin by assuming, without loss of generality, that $\Gamma_{\rm out}\subset \partial D$. We define the set of admissible domains as a subset of the collection of domains inside the hold-all domain $D$ that possesses the cone property. Furthermore, we define the topology on the said admissible set by the convergence of the corresponding characteristic function of each domain in the $L^\infty$-topology. As highlighted by Henrot and Privat in \cite{privat2010} and is rigorously discussed in \cite{zolesio2011} and \cite{henrot2018}, this approach helps in the preservation of volume which is among the goals in our exposition. Several authors utilized parametrizing the free-boundary (c.f. \cite{rabago2019,rabago2020} and the references therein) to define the set of admissible domains, however free-boundary parametrization might lead to generating domains with varying volumes.

We shall adapt the definition of the {\it cone property} as in \cite{chenais1975}.  In what follows, $(\cdot,\cdot)$ and $\|\cdot\|$ denote the inner product and the norm in $\mathbb{R}^2$, respectively.

\begin{definition} Let $h>0$,  $2h>r>0$, $\theta\in[0,2\pi]$, and \(\xi\in\mathbb{R}^2\) such that $\|\xi\| = 1$. 

(i) The cone of angle $\theta$, height $h$ and axis $\xi$ is defined by
\[ C(\xi,\theta,h) = \{x\in\mathbb{R}^2; (x,\xi)>\|x\|\cos\theta, \|x\|<h \}.\]

(ii) A set $\Omega\subset\mathbb{R}^2$ is said to satisfy the \text{cone property} if and only if for all $x\in\partial\Omega$, there exists $C_x=C(\xi_x,\theta,h)$, such that for all $y\in B(x,r)\cap\Omega$ we have $y+C_x \subset\Omega$.
\end{definition}

From this definition,  the set of admissible domains  as 
\[ \Oad := \{\Omega\subset D; \Omega\text{ satisfies the }cone\ property\text{ and }|\Omega|=m \}. \]
This set of admissible domains has been established to be non-empty (see the proof of Proposition 4.1.1 in \cite{henrot2018}), which exempts us from the futility of the analyses we will be discussing.

A sequence $\{\Omega_n\}_n\subset\Oad$ is said to converge to $\Omega\in\Oad$ if 
\[\chi_{\Omega_n}\ws\chi_{\Omega} \text{ in }L^\infty(D),\]
where the function $\chi_A$ for a set $A\subset\mathbb{R}^2$ refers to the characteristic function defined by
\[\chi_A(x) =\left\{\begin{matrix} 1&\text{if } x\in A,\\ 0&\text{if }x\not\in A. \end{matrix}\right.   \] 

\begin{remark}
As explained by Henrot, A., et.al., \cite[Proposition 2.2.1]{henrot2018},  the weak$^*$ convergence in $L^\infty$ implies that the convergence also holds in the space $L^p_{loc}(\mathbb{R}^2)$ and thus $\chi_\Omega$ is almost everywhere a characteristic function.
\end{remark}

We shall denote the collection of characteristic functions of elements of $\Oad$ as $\mathcal{U}_{ad}$, i.e., $\mathcal{U}_{ad}=\{\chi_\Omega; \Omega\in\Oad \}$, and whenever we mention $\mathcal{U}_{ad}$ we take into account the weak$^*$ topology in $L^\infty(D)$.
We refer the reader to \cite[Chapter 5]{zolesio2011} and \cite[Chapter 2~Section 3]{henrot2018} for a more detailed discussion on the topology of characteristic functions of finite measurable domains.

The compactness of $\Oad$ follows from the fact that it is closed and relatively compact -- as defined by Chenais, D. \cite{chenais1975} -- with respect to the topology on $\mathcal{U}_{ad}$. One can also read upon the proof in \cite[Proposition 2.4.10]{henrot2018}. We shall not discuss the proof of such properties, nevertheless they are summarized on the lemma below.
\begin{lemma}
The set $\Oad$ is compact with respect to the topology on $\mathcal{U}_{ad}$.
\label{le:comome}
\end{lemma}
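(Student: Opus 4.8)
The plan is to prove compactness by combining a soft functional-analytic compactness (Banach--Alaoglu) for the weak-$*$ topology with the geometric rigidity supplied by the uniform cone property, which is precisely what prevents the weak-$*$ limit from being a ``fuzzy'' density rather than a genuine characteristic function. Throughout, I fix the common cone parameters $(\theta,h,r)$ shared by every $\Omega\in\Oad$, and I verify compactness sequentially by exhibiting, for an arbitrary sequence in $\Oad$, a subsequence converging in $\mathcal{U}_{ad}$ to an element of $\Oad$.

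First I would establish relative compactness. Since $0\le\chi_\Omega\le 1$ for every $\Omega\in\Oad$, the family $\mathcal{U}_{ad}$ is a bounded subset of $L^\infty(D)=(L^1(D))^*$. As $L^1(D)$ is separable, the closed unit ball of $L^\infty(D)$ is weak-$*$ sequentially compact and metrizable on bounded sets by Banach--Alaoglu. Hence any sequence $\{\chi_{\Omega_n}\}_n\subset\mathcal{U}_{ad}$ admits a subsequence (not relabeled) and a limit $f\in L^\infty(D)$ with $0\le f\le 1$ a.e.\ such that $\chi_{\Omega_n}\ws f$ in $L^\infty(D)$.

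Next I would show closedness, i.e.\ that $f=\chi_\Omega$ for some $\Omega\in\Oad$. This is the heart of the argument and rests on Chenais' compactness theorem \cite{chenais1975}: a family of domains obeying a uniform cone property is relatively compact for the Hausdorff complementary topology, and the cone property is stable under this convergence. Thus, after passing to a further subsequence, $\Omega_n$ converges in the Hausdorff complementary sense to an open set $\Omega\subset D$ enjoying the same cone property. The uniform cone property also forces the boundaries to carry no mass in the limit, which upgrades the weak-$*$ convergence to the strong convergence $\chi_{\Omega_n}\to\chi_\Omega$ in $L^1(D)$, equivalently in $L^p_{loc}(\mathbb{R}^2)$, as recorded in the preceding remark; by uniqueness of weak-$*$ limits, $f=\chi_\Omega$ is a genuine characteristic function.

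Finally, volume preservation follows immediately: testing the convergence against the constant $1\in L^1(D)$ gives $m=|\Omega_n|=\int_D\chi_{\Omega_n}\,\du x\to\int_D\chi_\Omega\,\du x=|\Omega|$, so $|\Omega|=m$ and therefore $\Omega\in\Oad$. Since every sequence in $\Oad$ admits a subsequence converging in $\mathcal{U}_{ad}$ to an element of $\Oad$, the set is compact. The main obstacle is exactly the step that the limit is again the characteristic function of a cone-property domain: all the geometry lives there, and without the \emph{uniform} cone property the weak-$*$ limit could fail to be $\{0,1\}$-valued, as oscillating boundaries could accumulate mass, which is the pathology that Chenais' theorem rules out.
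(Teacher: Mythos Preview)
Your proposal is correct and follows essentially the same strategy the paper invokes: the paper does not give its own proof but defers to Chenais \cite{chenais1975} and to \cite[Proposition 2.4.10]{henrot2018}, stating that $\Oad$ is closed and relatively compact in the $\mathcal{U}_{ad}$ topology, which is precisely your decomposition into weak-$*$ relative compactness via Banach--Alaoglu and closedness via the uniform cone property. Your additional remark that testing against $1\in L^1(D)$ recovers the volume constraint is the natural way to check $|\Omega|=m$ for the limit, completing the argument.
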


Another important implication of the cone property is the existence of a uniform extension operator.
\begin{lemma}[cf. \cite{chenais1975}]
Let $d=\{1,2 \}$ and $m\in\mathbb{N}\cup\{0\}$, there exists $K>0$ such that for all $\Omega\in\Oad$, there exists \[\mathcal{E}^d_{\Omega}:H^{m}(\Omega)^d\to H^{m}(D)^d\] which is linear and continuous such that  $\displaystyle\max\{\|\mathcal{E}^d_\Omega\|\}_{d = 1,2}\le K$.
\label{uniext}
\end{lemma}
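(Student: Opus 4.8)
The plan is to exploit the fact that every $\Omega\in\Oad$ satisfies the cone property with the \emph{same} parameters $(\theta,h,r)$; this uniformity is precisely what upgrades the classical single-domain Sobolev extension theorem to a family of operators sharing a common bound. I would first reduce to the scalar case $d=1$: once a uniformly bounded extension $\mathcal{E}^1_\Omega:H^m(\Omega)\to H^m(D)$ is constructed, the vector-valued operator for $d=2$ is obtained by applying $\mathcal{E}^1_\Omega$ componentwise, so that $\|\mathcal{E}^2_\Omega\|=\|\mathcal{E}^1_\Omega\|$ and the bound on $\max_{d=1,2}\|\mathcal{E}^d_\Omega\|$ follows at once.

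For the scalar extension I would use a localization--patching argument. First, cover a neighborhood of $\partial\Omega$ by finitely many balls $B(x_i,r/2)$, $i=1,\dots,N$, centered at boundary points $x_i\in\partial\Omega$, together with one interior set $U_0\Subset\Omega$ covering the remainder of $\Omega$. Because the balls have a fixed radius $r/2$ and all domains sit inside the bounded hold-all $D$, a Vitali/Besicovitch-type covering argument yields a number $N$ of balls, and a subordinate partition of unity $\{\psi_i\}_{i=0}^{N}$ whose $W^{m,\infty}$-norms are bounded \emph{independently of $\Omega$}.

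The heart of the argument is the local extension on each boundary ball. The cone property with fixed angle $\theta$ forces $\partial\Omega\cap B(x_i,r)$ to be, after a rigid rotation aligning the cone axis $\xi_{x_i}$ with a coordinate direction, the graph of a Lipschitz function whose Lipschitz constant $L$ depends only on $\theta$ (the cone opening controls the admissible slope). On such a uniformly Lipschitz chart a Stein-type reflection extension $E_i$ exists with operator norm $H^m\to H^m$ depending only on $m$, $L$ and $r$. Setting $\mathcal{E}^1_\Omega u=\psi_0 u+\sum_{i=1}^{N}\psi_i\,E_i(u)$ and estimating each term with the product rule gives a bound of the form $\|\mathcal{E}^1_\Omega\|\le C\bigl(m,N,L,r,\max_i\|\psi_i\|_{W^{m,\infty}}\bigr)$.

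Since $N$, $L$, $r$ and the partition-of-unity bounds are all controlled by the common cone parameters and by $D$ alone, the right-hand side is a constant $K$ independent of the particular $\Omega\in\Oad$, which is the assertion. The main obstacle is the local step: extracting from the purely geometric cone property a local Lipschitz-graph representation whose slope bound is uniform over the whole family, and then verifying that the norm of the reflection (Stein) extension genuinely depends only on that uniform slope and not on finer features of $\partial\Omega$. This is exactly the technical core carried out by Chenais \cite{chenais1975}, whose construction I would follow.
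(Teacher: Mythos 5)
Your proposal is correct and follows exactly the construction the paper relies on: the paper gives no proof of this lemma at all, citing Chenais \cite{chenais1975}, and your argument (uniform cone parameters $\Rightarrow$ uniform local Lipschitz charts $\Rightarrow$ reflection-type local extensions patched by a partition of unity with bounds depending only on $\theta$, $h$, $r$, $m$ and $D$, then componentwise passage to $d=2$) is precisely the standard proof in that reference. No gaps beyond the local graph/reflection step, which you correctly identify as the technical core and attribute to Chenais, just as the paper does.
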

These result will be instrumental for proving some vital properties, for example when we show that the \text{domain-to-state} map is continuous.

\begin{remark} The set of admissible domains can be identified as a collection of Lipschitzian domains. Since we only consider the hold-all domain $D$ to possess a bounded boundary we refer the reader to \cite[Theorem 2.4.7]{henrot2018} for the proof of such property.
\label{lips}
\end{remark}

\subsection{Well-posedness of the Optimization Problem}

Before showing that the optimal shape indeed exists, we take note that from Theorem \ref{th:wp} the following map is well-posed:
\[
\Omega\in \Oad \mapsto (\tilde{\bu}(\Omega),p(\Omega))\in \bH_{\Gamma_0}^1(\Omega)\times L^2(\Omega).
\]
This implies that we can write the objective functional in the manner that it solely depends on the domain $\Omega$, i.e., 
\[ 
	\mathcal{J}(\Omega) := \mathcal{G}({\bu}(\Omega),\Omega),
\]
where ${\bu}(\Omega) = \tilde{{\bu}}+\bg.$ However, the mentioned well-posedness of the domain-to-state map is insufficient to prove the existence of the minimizing domain. In fact, we need the continuity of the map $\Omega\mapsto(\tilde{\bu}(\Omega),p(\Omega))$.

\begin{proposition}
Let $\{\Omega_n\}_n\subset\Oad$ be a sequence that converges to $\Omega\in\Oad$. Suppose that for each $\Omega_n$, $(\tilde{\bu}_n,p_n)\in\bH_{\Gamma_0}^1(\Omega_n)\times L^2(\Omega_n)$ is the weak solution of the Stokes equations on the respective domain; then the extensions $(\overline{{\bu}}_n,\overline{p}_n):=(\mathcal{E}^2_{\Omega_n}\tilde\bu_n,\mathcal{E}^1_{\Omega_n}p_n)\in \bH_{\Gamma_0}^1(D)\times L^2(D)$ coverges to a state $(\overline{\bu},\overline{p})\in \bH_{\Gamma_0}^1(D)\times L^2(D)$, such that $(\tilde{\bu},p)=(\overline{\bu},\overline{p})\big|_{\Omega}\in\bH_{\Gamma_0}^1(\Omega)\times L^2(\Omega)$ is a solution to \eqref{weak}.
\label{contu}
\end{proposition}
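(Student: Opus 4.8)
The plan is to establish uniform bounds, extract weak limits on the fixed hold-all domain $D$, and then pass to the limit in \eqref{weak}, the one genuine difficulty being the recovery of the no-slip condition on the moving obstacle boundary. First I would invoke Theorem \ref{th:wp}, in the hold-all form stated in the Remark following it, to bound $\|\tilde{\bu}_n\|_{\bH^1_{\Gamma_0}(\Omega_n)}+\|p_n\|_{L^2(\Omega_n)}$ by a constant depending only on $\blf$, $\bg$ and $D$, uniformly in $n$. Lemma \ref{uniext} then transfers this into a uniform bound of $(\overline{\bu}_n,\overline{p}_n)$ in $\bH^1_{\Gamma_0}(D)\times L^2(D)$, since the extension norms are dominated by $K$ independently of $\Omega_n$. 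Consequently a subsequence satisfies $\overline{\bu}_n\rightharpoonup\overline{\bu}$ in $\bH^1_{\Gamma_0}(D)$, $\overline{p}_n\rightharpoonup\overline{p}$ in $L^2(D)$, and, by Rellich--Kondrachov, $\overline{\bu}_n\to\overline{\bu}$ strongly in $L^2(D)^2$; I set $(\tilde{\bu},p):=(\overline{\bu},\overline{p})\big|_\Omega$.

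The crucial observation is that the convergence $\chi_{\Omega_n}\ws\chi_\Omega$ is in fact strong in $L^2(D)$: weak$^*$ convergence in $L^\infty(D)$ gives weak convergence in $L^2(D)$, while the volume constraint forces $\|\chi_{\Omega_n}\|_{L^2(D)}^2=m=\|\chi_\Omega\|_{L^2(D)}^2$, and weak convergence together with convergence of norms is strong convergence in the Hilbert space $L^2(D)$. This is precisely what lets me pass to the limit in products over the moving domains. Writing every integral over $\Omega_n$ as an integral over $D$ weighted by $\chi_{\Omega_n}$, and choosing a test function $\bphi\in\mathcal{W}(\Omega)$, I note that $\bphi$ vanishes near the obstacle boundary $\Gamma_{\rm f}$ of $\Omega$; because the uniform cone property also yields Hausdorff convergence of the obstacles (cf. \cite{chenais1975,henrot2018}), the support of $\bphi$ avoids the obstacle of $\Omega_n$ for $n$ large, so that $\bphi\big|_{\Omega_n}\in\mathcal{W}(\Omega_n)$ is admissible in \eqref{weak} on $\Omega_n$. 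The products $\chi_{\Omega_n}\nabla\bphi$ and $\chi_{\Omega_n}\dive\bphi$ then converge strongly in $L^2(D)$ and, paired against the weakly convergent $\nabla\overline{\bu}_n$ and $\overline{p}_n$, produce the corresponding integrals over $\Omega$; the right-hand side converges because $\chi_{\Omega_n}\to\chi_\Omega$ in $L^2(D)$ against the fixed data $\blf$ and $\nabla\bg$. This yields the momentum identity for $(\tilde{\bu},p)$, and the incompressibility $b(\tilde{\bu},q)_\Omega=0$ follows even more directly since $q\big|_{\Omega_n}\in L^2(\Omega_n)$ needs no support restriction; a density argument extends both to all test pairs in $\bH^1_{\Gamma_0}(\Omega)\times L^2(\Omega)$.

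The step I expect to be the main obstacle is proving that the limit carries the no-slip condition on $\Gamma_{\rm f}$, i.e. that $\tilde{\bu}\in\bH^1_{\Gamma_0}(\Omega)$ and not merely $\tilde{\bu}\in H^1(\Omega)^2$, because the extension $\mathcal{E}^2_{\Omega_n}$ need not vanish inside the obstacle. To circumvent this I would run the compactness argument a second time with the extension-by-zero $\hat{\bu}_n$ of $\tilde{\bu}_n$, which lies in $H^1(D)^2$ precisely because $\tilde{\bu}_n\in\bH^1_{\Gamma_0}(\Omega_n)$ has zero trace on $\partial\Omega_n\backslash\Gamma_{\rm out}$. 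After extracting $\hat{\bu}_n\rightharpoonup\hat{\bu}$ in $H^1(D)^2$ with strong $L^2(D)^2$ convergence, the pointwise identities $\chi_{\Omega_n}(\hat{\bu}_n-\overline{\bu}_n)=0$ and $(1-\chi_{\Omega_n})\hat{\bu}_n=0$ pass to the limit, using the strong convergence of $\chi_{\Omega_n}$, to give $\hat{\bu}=\tilde{\bu}$ a.e. on $\Omega$ and $\hat{\bu}=0$ a.e. on $D\backslash\Omega$. Hence $\hat{\bu}\in H^1(D)^2$ vanishes on the open obstacle region, so its trace on $\Gamma_{\rm f}$ is zero; combined with weak continuity of the trace on the fixed portions $\Gamma_{\rm in}\cup\Gamma_{\rm w}$, this shows $\tilde{\bu}$ has zero trace on $\Gamma_0$ and therefore lies in $\bH^1_{\Gamma_0}(\Omega)$.

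Finally I would invoke uniqueness of the weak Stokes solution on $\Omega$ --- itself a consequence of the coercivity of $a(\cdot,\cdot)_\Omega$ and the inf--sup condition for $b(\cdot,\cdot)_\Omega$ used in Theorem \ref{th:wp} --- to conclude that the limit $(\tilde{\bu},p)$ is independent of the chosen subsequence, so that the stated convergence holds along the full sequence. Everything apart from the boundary-condition step reduces to the strong-times-weak limit passage made available by the strong $L^2$ convergence of the characteristic functions, which is ultimately a consequence of the volume constraint $|\Omega_n|=m$.
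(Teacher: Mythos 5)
Your proposal is correct and rests on the same skeleton as the paper's proof: uniform bounds from Theorem \ref{th:wp} (in the hold-all form) combined with the uniform extension operators of Lemma \ref{uniext}, extraction of weak limits via Rellich--Kondrachov, and a strong-times-weak limit passage in the variational identities written with characteristic functions. Where you genuinely go beyond the paper is in two places its proof glosses over. First, the paper asserts \eqref{weakDn} for every $(\bphi,q)\in\bH^1_{\Gamma_0}(D)\times L^2(D)$, which tacitly assumes that $\bphi\big|_{\Omega_n}$ is admissible in \eqref{weak} on $\Omega_n$; this is not automatic, since such a $\bphi$ need not vanish on the obstacle boundary of $\Omega_n$. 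Your device --- testing with $\bphi\in\mathcal{W}(\Omega)$ supported away from the limiting obstacle, using Hausdorff convergence of the obstacles (available under the uniform cone property) to make $\bphi\big|_{\Omega_n}$ admissible for large $n$, then closing by density of $\mathcal{W}(\Omega)$ in $\bH^1_{\Gamma_0}(\Omega)$ --- repairs exactly this point. Second, the paper never checks that the limit $\overline{\bu}\big|_\Omega$ satisfies the no-slip condition, i.e. that it lies in $\bH^1_{\Gamma_0}(\Omega)$ rather than merely $H^1(\Omega)^2$, which is needed for $(\tilde{\bu},p)$ to be a solution of \eqref{weak}; your second compactness argument with the extension by zero $\hat{\bu}_n$, passing the identities $\chi_{\Omega_n}(\hat{\bu}_n-\overline{\bu}_n)=0$ and $(1-\chi_{\Omega_n})\hat{\bu}_n=0$ to the limit, settles this cleanly. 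Your observation that the volume constraint $|\Omega_n|=m$ upgrades $\chi_{\Omega_n}\ws\chi_\Omega$ to strong convergence in $L^2(D)$ (weak convergence plus convergence of norms in a Hilbert space) is also a tidy alternative to the paper's citation of $L^p_{loc}$ convergence. One caveat on your last step: uniqueness of the weak Stokes solution shows that the restriction $(\tilde{\bu},p)$ is the same along every subsequence, but the extensions $(\overline{\bu}_n,\overline{p}_n)$ may still have different limits on $D\backslash\Omega$ along different subsequences, so full-sequence convergence should only be claimed for the restrictions to $\Omega$; the paper, for its part, only ever extracts a subsequence, so you lose nothing relative to it.
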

\begin{proof}
From the uniform extension property, there exist $K>0$ such that 
\[\|\overline{\bu}_n\|_{\bH_{\Gamma_0}^1(D)} + \|\overline{p}\|_{L^2(D)} \le K(\|\tilde\bu_n\|_{\bH_{\Gamma_0}^1(\Omega_n)} + \|p_n\|_{L^2(\Omega_n)})\text{ for all }\Omega_n.\] Furthermore, from Theorem \ref{th:wp}
\[\|\tilde\bu_n\|_{\bH_{\Gamma_{0}}^1(\Omega_n)} + \|p_n\|_{L^2(\Omega_n)} \le c(\|\blf\|_{L^2(D)^2} +\|\bg\|_{H^{1}(D)^2}) .\] 
This implies uniform boundedness of $\{(\overline{\bu}_n,\overline{p}_n)\}_n$ in $\bH_{\Gamma_0}^1(D)\times L^2(D)$. Hence, by Rellich--Kondrachov's theorem, there exists a subsequence of $\{(\overline{\bu}_n,\overline{p}_n)\}_n$, which shall also be denoted as $\{(\overline{\bu}_n,\overline{p}_n)\}_n$, and an element $(\overline{\bu},\overline{p})\in\bH_{\Gamma_0}^1(D)\times L^2(D)$ such that the following properties hold:
\begin{align}
\left\{
\begin{aligned}
	&\overline{\bu}_n \rightharpoonup \overline{\bu} &&\text{in }H^1(D)^2,\\
	&\overline{\bu}_n \to \overline{\bu}	&&\text{in }L^2(D)^2,\\
	&\overline{p}_n  \rightharpoonup \overline{p}		&&\text{in }L^2(D).
\end{aligned}\right.\label{solcon}
\end{align} 

\noindent\textit{Passing through the limit.}
We shall now show that the limit $(\overline{\bu},\overline{p})$, when restricted to the domain $\Omega$, solves \eqref{weak}. 
Indeed, since $(\tilde{\bu}_n,{p}_n) = (\overline{\bu}_n,\overline{p}_n)\big|_{\Omega_n}$ solves \eqref{weak} in $\Omega_n$, then for any $(\bphi,q)\in \bH^1_{\Gamma_{0}}(D)\times L^2(D)$
\begin{equation}
	\left\{
	\begin{aligned}
	\nu(\chi_n\nabla\overline\bu_n,\nabla\bphi)_D + b(\bphi,\chi_n\overline{p}_n)_D & = (\chi_n\blf,\bphi)_D -  \nu(\chi_n\nabla\bg,\nabla\bphi)_{D},\\
	b(\overline\bu_n,\chi_n q)_D & = 0.
	\end{aligned}
	\right.
\label{weakDn}
\end{equation}
Furthermore, since $\chi_n:=\chi_{\Omega_n}\ws \chi:=\chi_\Omega$ in $\mathcal{U}_{ad}$ and from \eqref{solcon}, we can easily show that 
\begin{equation}
	\left\{
	\begin{aligned}
	\nu(\chi\nabla\overline\bu,\nabla\bphi)_D + b(\bphi,\chi\overline{p})_D & = (\chi\blf,\bphi)_D - \nu(\chi\nabla\bg,\nabla\bphi)_{D},\\
	b(\overline\bu,\chi q)_D & = 0.
	\end{aligned}
	\right.
\label{weakD}
\end{equation}
Thus, $(\tilde\bu,p)=(\overline{\bu},\overline{p})\big|_{\Omega}$ is a solution to \eqref{weak}. 
\hfill
\end{proof}

As usual in proving the existence of solution for minimization problems, the arguments will be based on the lower semicontinuity, and the existence of a minimizing sequence based on the boundedness below of the objective functional.

Note that the objective functional $\mathcal{G}$ can be dissected into three different integrals, namely $J_1(\bu,\Omega) = \frac{\gamma_1}{2}\|\nabla\times\bu\|_{L^2(\Omega)}^2$, $J_2({\bu},\Omega) = {\gamma_2}\|h(\mathrm{det}(\nabla{\bu}))\|_{L^1(\Omega)}$, and $P(\Omega)= \int_{\Gamma_{\rm f}}\du s$. Thus, to establish that $\mathcal G$ is bounded below, we can show that $J_1$ and $J_2$ are uniformly bounded. This implies that, since the boundary $\Gamma_{\rm f}$ is strictly inside the bounded hold-all domain $D$, $C \le \mathcal G$ for some constant $C$.

Note that $J_1$ can be estimated by the $H^{1}(\Omega)^2$ norm of the state ${\bu}$, i.e.,
\begin{align*}
J_1({\bu},\Omega) & = \frac{\gamma_1}{2}\|\nabla\times{\bu}\|_{L^2(\Omega)}^2  \le  \frac{\gamma_1}{2}\int_{\Omega} |\nabla{\bu}|^2\du x \le  \frac{\gamma_1}{2}\|{\bu}\|^2_{H^{1}(\Omega)^2}.
\end{align*}
Meanwhile, since $t^3/(t^2+1)\le t$ for any $t$, we get the following estimate,
 \[J_2({\bu},\Omega)\le \gamma_2\|\mathrm{det}(\nabla{\bu})\|_{L^1(\Omega)}\le \frac{1}{2}\|{\bu}\|_{H^{1}(\Omega)^2}^2,\]
where Young's inequality has been employed for the last inequality.

From Proposition \ref{contu}, we have established the continuity of $\Omega\mapsto \tilde{\bu}$, which also implies the continuity of the map $\Omega \mapsto {\bu} = \tilde{\bu}+\bg\in H^{1}(\Omega)^2$. Thus, from the recently established estimates for $J_1$ and $J_2$ with respect to ${\bu}$, we can infer that the respective functionals are continuous (and hence upper semicontinuous) with respect to the elements of $\Oad\,$.
Furthermore, the functionals are uniformly bounded, since for each $i=1,2$,
\begin{align*}
	J_i({\bu},\Omega) & \le c\|{\bu}\|_{H^{1}(\Omega)^2}^2 = c\|\tilde\bu+\bg\|_{H^{1}(\Omega)^2}^2  \le \tilde{c}(\|\blf\|_{L^2(D)^2} +\|\bg\|_{H^{1}(D)^2} )^2.
\end{align*}

Given all these facts, we can now show the existence of an optimal shape rendering our problem well-posed. 
\begin{theorem}
Let $\alpha,\gamma_1,\gamma_2 > 0$, ${\blf}\in L^2(D)^2$, and $\bg\in H^2(D)^2$ satisfy \eqref{gprop}. Then, there exists $\Omega^*\in\Oad$ such that 
\[ \mathcal{J}(\Omega^*) = \min_{\Omega\in\Oad}\mathcal{J}(\Omega).\]
\end{theorem}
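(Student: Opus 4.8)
The plan is to apply the direct method of the calculus of variations. First I would set $\mathcal{J}^\ast := \inf_{\Omega\in\Oad}\mathcal{J}(\Omega)$ and observe that this infimum is finite: the preceding estimates show that $J_1$ and $J_2$ are uniformly bounded above on $\Oad$ by $\tilde c(\|\blf\|_{L^2(D)^2}+\|\bg\|_{H^1(D)^2})^2$, while $P(\Omega)=\int_{\Gamma_{\rm f}}\du s\ge 0$; hence $\mathcal{J}=-J_1-J_2+\alpha P$ is bounded below, and a minimizing sequence $\{\Omega_n\}_n\subset\Oad$ with $\mathcal{J}(\Omega_n)\to\mathcal{J}^\ast$ exists.

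Next, using the compactness of $\Oad$ with respect to the topology on $\mathcal{U}_{ad}$ (Lemma \ref{le:comome}), I would extract a subsequence, not relabeled, such that $\chi_{\Omega_n}\ws\chi_{\Omega^\ast}$ in $L^\infty(D)$ for some $\Omega^\ast\in\Oad$; closedness of $\Oad$ guarantees that the limit again satisfies the cone property and $|\Omega^\ast|=m$. Proposition \ref{contu} then provides an associated limiting state, with the extended states $(\overline{\bu}_n,\overline{p}_n)$ converging to the extension of $(\tilde{\bu},p)=(\bu(\Omega^\ast)-\bg,\,p(\Omega^\ast))$ in the sense of \eqref{solcon}.

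It then remains to prove $\mathcal{J}(\Omega^\ast)\le\liminf_n\mathcal{J}(\Omega_n)$, which I would split into two contributions. The continuity, and in particular the upper semicontinuity, of $J_1$ and $J_2$ along domain variations established above renders $-(J_1+J_2)$ lower semicontinuous, so that $-(J_1+J_2)(\Omega^\ast)\le\liminf_n[-(J_1+J_2)(\Omega_n)]$. For the regularizing term I would use that the weak$^\ast$ convergence forces $\chi_{\Omega_n}\to\chi_{\Omega^\ast}$ in $L^1_{loc}(\mathbb{R}^2)$ (as noted in the Remark following the definition of convergence), whence the classical lower semicontinuity of the perimeter under $L^1$-convergence of characteristic functions gives $P(\Omega^\ast)\le\liminf_n P(\Omega_n)$; since only $\Gamma_{\rm f}$ varies while the channel walls are fixed, this transfers directly to the free-boundary functional $P$. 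Adding the two inequalities and using $\alpha>0$ yields $\mathcal{J}(\Omega^\ast)\le\liminf_n\mathcal{J}(\Omega_n)=\mathcal{J}^\ast$, and because $\Omega^\ast\in\Oad$ forces $\mathcal{J}(\Omega^\ast)\ge\mathcal{J}^\ast$, equality holds and $\Omega^\ast$ is the sought minimizer.

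The step I expect to be the main obstacle is securing the \emph{correct direction} of semicontinuity for the vortex functionals. Since $J_1$ and $J_2$ enter $\mathcal{J}$ with a negative sign, the minimization requires their upper semicontinuity, whereas the squared-norm structure of $J_1$ and the nonlinear Jacobian dependence of $J_2$ only yield weak lower semicontinuity under mere weak $H^1$-convergence. The argument therefore genuinely hinges on the strong $H^1$-continuity of the domain-to-state map, so that $J_1$ becomes continuous rather than only weakly lower semicontinuous, together with the weak continuity of the two-dimensional Jacobian $\det(\nabla\bu)$ controlling the det-grad term; justifying these is the delicate part, the perimeter's lower semicontinuity being standard.
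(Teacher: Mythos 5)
Your proposal is correct and follows essentially the same route as the paper: the direct method with a minimizing sequence, compactness of $\Oad$ with respect to the topology on $\mathcal{U}_{ad}$ (Lemma \ref{le:comome}), lower semicontinuity of $-J_1$, $-J_2$ (via the continuity of the domain-to-state map, Proposition \ref{contu}) together with that of the perimeter term, and the concluding liminf chain. The obstacle you flag at the end is real but is equally present in the paper's own argument: there the upper semicontinuity of $J_1$ and $J_2$ is asserted to follow from Proposition \ref{contu}, even though that proposition only establishes weak $H^1$-convergence of the extended states, so the strong-convergence upgrade you identify as the delicate point is precisely what the paper leaves implicit.
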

\begin{proof}
From the fact that $\mathcal{J}$ is bounded below, there exists a sequence $\{\Omega_n\}\subset\Oad$ such that 
\[ \mathcal{J}^* : = \inf_{\Omega\in\Oad}\mathcal{J}(\Omega) = \lim_{n\to\infty}\mathcal{J}(\Omega_n).\]

As a consequence of Lemma \ref{le:comome}, there exists a subsequence of $\{\Omega_n\}$, which shall be denoted in the same manner, and a domain $\Omega^*$ such that $\Omega_n\to\Omega^*$ in $\Oad\, $.
By definition, $\mathcal{J}^*\le \mathcal{J}(\Omega^*)$. However, since $-J_1$, $-J_2$, and $P$ (for $P$, see Proposition 2.3.7 in \cite{henrot2018}) are lower semicontinuous,
\begin{align*}
\mathcal{J}(\Omega^*) & = P(\Omega^*) - (J_1(\Omega^*) + J_2(\Omega^*)) \le \liminf_{n\to\infty} [P(\Omega_n) - (J_1(\Omega_n) + J_2(\Omega_n))]\\ & = \liminf_{n\to\infty} \mathcal{J}(\Omega_n) = \mathcal{J}^*.
\end{align*}
Therefore, $\Omega^*$ is our desired minimizer.\hfill
\end{proof}

\section{Shape Sensitivity Analysis}\label{sec 4} 
In this section, given the previously established existence of the optimal shape, we shall discuss the necessary condition for the optimization problem. This is done by investigating the sensitivity of the objective functional $\mathcal{G}$ with respect to shape variations. We start this section by introducing the velocity method, where we consider domain variations generated by a given velocity.

\subsection{Identity perturbation}
In what follows, we consider a family of autonomous deformation fields ${\bth}$ belonging to ${\bTh} := \{{\bth}\in C^{1,1}(\overline{D};\mathbb{R}^2); {\bth} = 0 \text{ on }\partial D\cup\Gamma_{\rm in}\cup\Gamma_{\rm w}\}$. An element ${\bth}\in\bTh$ generates an identity perturbation operator $T_t:\overline{D}\to\mathbb{R}^2$ defined by
\begin{align}
\begin{aligned}
T_t(x) = x + t{\bth}(x), \quad\forall x \in \overline{D}.
\end{aligned}
\label{transform}
\end{align}
With this operator, a domain $\Omega\subset D$ is perturbed so that for some $\tau:=\tau({\bth})>0$ we have a family of perturbed domains $\{\Omega_t = T_t(\Omega); t<\tau \}$. Here, $\tau$ is chosen so that $\mathrm{det}\nabla T_t>0$, where the following statement shows that $\tau$ is indeed dependent to the velocity field ${\bth}$.
\begin{lemma}[cf. \cite{bacani2013,haslinger2006}]
Let ${\bth}\in\bTh$ and $T_t$ be the generated transformation by means of \eqref{transform} and denote $J_t := \mathrm{det}\nabla T_t$. Then, we have the following:
\begin{itemize}
	\item[(i)] $J_t = 1 + t\dive{\bth} + t^2\mathrm{det}\nabla{\bth}$; 
	\item[(ii)] there exists $\tau_0=\tau_0({\bth}),\, \alpha_1,\alpha_2>0$ independent of $t$ and $x$ such that 
	\[ 0<\alpha_1\le J_t(x)\le \alpha_2,\quad\forall t\in[0,\tau_0],\forall x\in D. \]
\end{itemize}
\label{detbound}
\end{lemma}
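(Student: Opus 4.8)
The plan is to treat the two parts separately, with part (i) being a direct algebraic computation and part (ii) following from the uniform bounds afforded by the regularity of $\bth$ together with the compactness of $\overline{D}$.

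For (i), I would write $\nabla T_t = I + t\nabla\bth$, a $2\times 2$ matrix, and simply expand its determinant. Denoting $\nabla\bth = (a_{ij})_{i,j=1,2}$, the explicit formula for the determinant of a $2\times 2$ matrix gives
\begin{align*}
J_t = \det(I + t\nabla\bth) = (1 + t a_{11})(1 + t a_{22}) - t^2 a_{12} a_{21}.
\end{align*}
Collecting powers of $t$, the constant term is $1$, the coefficient of $t$ is $a_{11} + a_{22} = \dive\bth$, and the coefficient of $t^2$ is $a_{11}a_{22} - a_{12}a_{21} = \det\nabla\bth$. This reproduces the claimed identity exactly, with no inequalities involved.

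For (ii), the key observation is that $\bth\in C^{1,1}(\overline{D};\mathbb{R}^2)$ and $\overline{D}$ is compact, so every entry of $\nabla\bth$ is bounded on $\overline{D}$; hence there exist constants $C_1,C_2>0$, independent of $x$, with $|\dive\bth(x)|\le C_1$ and $|\det\nabla\bth(x)|\le C_2$ for all $x\in D$. Using the identity from (i),
\begin{align*}
1 - t C_1 - t^2 C_2 \le J_t(x) \le 1 + t C_1 + t^2 C_2, \qquad \forall x\in D.
\end{align*}
Since $t\mapsto t C_1 + t^2 C_2$ is continuous and vanishes at $t=0$, I would choose $\tau_0 = \tau_0(\bth) > 0$ small enough that $t C_1 + t^2 C_2 \le \tfrac12$ for all $t\in[0,\tau_0]$ (for instance any $\tau_0$ below the positive root of $C_2 t^2 + C_1 t - \tfrac12 = 0$). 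Then $\alpha_1 := \tfrac12$ and $\alpha_2 := \tfrac32$ furnish the desired two-sided bound, and both $\tau_0$ and the $C_i$ depend only on $\bth$ (through $\|\nabla\bth\|_{L^\infty(D)}$), not on $t$ or $x$.

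The computation is entirely elementary, so there is no serious obstacle; the only point requiring care is making the dependence of the constants explicit. In particular, one must secure a strictly positive lower bound $\alpha_1$ that is uniform in $x$, which is exactly what the $L^\infty$ control of $\nabla\bth$ over the compact set $\overline{D}$ provides, and which in turn forces $\tau_0$ to be chosen in terms of $\bth$. This last dependence is precisely the feature the lemma emphasizes, namely that the admissible range $\tau$ is governed by the deformation field.
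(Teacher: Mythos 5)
Your proposal is correct. The paper does not spell out a proof of this lemma at all -- it simply cites \cite{bacani2013,haslinger2006} -- and the argument in those references is exactly the one you give: for part (i) the algebraic identity $\det(I+tA)=1+t\,\mathrm{tr}(A)+t^2\det A$ for $2\times2$ matrices applied to $A=\nabla{\bth}$, and for part (ii) the uniform bound on $\nabla{\bth}$ over the compact set $\overline{D}$ coming from ${\bth}\in C^{1,1}(\overline{D};\mathbb{R}^2)$, followed by choosing $\tau_0$ so small that the perturbation $tC_1+t^2C_2$ stays below a fixed fraction of $1$; your explicit constants $\alpha_1=\tfrac12$, $\alpha_2=\tfrac32$ and the dependence of $\tau_0$ only on $\|\nabla{\bth}\|_{L^\infty(D)}$ are exactly what the lemma requires.
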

We further mention that by the definition of ${\bth}$, i.e., ${\bth}\equiv 0$ on $\Gamma_{\rm out}$, $\Gamma_{\rm in}$, and $\Gamma_{\rm w}$, then these boundaries are part of the perturbed domains $\Omega_t$. To be precise, we have
\[ \partial\Omega_t = \Gamma_{\rm out}\cup\Gamma_{\rm in}\cup\Gamma_{\rm w}\cup T_t(\Gamma_{\rm f}). \]
Additionally, a domain that has at most $C^{1,1}$ regularity preserves its said regularity with this transformation, this means that for $0\le t\le \tau$, $\Omega_t$ has $C^{1,1}$ regularity given that the initial domain $\Omega$ is a $C^{1,1}$ domain. Lastly, we note that $\{\Omega_t = T_t(\Omega); t<\tau \}\subset \Oad$ due to Remark \ref{lips}.

Before we move further in this exposition, let us look at some vital properties of $T_t$. 
\begin{lemma}[cf \cite{zolesio2011,sokolowski1992}]
Let ${\bth}\in\bTh$, then for sufficiently small $\tau>0$, the map $T_t$ defined in \eqref{transform} satisfies the following properties:
\begin{itemize}
	\item[$\bullet$] $[t\mapsto T_t]\in C^1([0,t_0];C^{2,1}(\overline{D},\mathbb{R}^2));\ \quad\hspace{-.15in} \bullet\ [t\mapsto T_t^{-1}]\in C([0,t_0];C^{2,1}(\overline{D},\mathbb{R}^2));$
	\item[$\bullet$] $[t\mapsto J_t]\in C^1([0,t_0];C^{1,1}(\overline{D}));\quad\qquad\hspace{-.15in}\! \bullet\ M_t,M_t^\top\in C^{1,1}(\overline{D},\mathbb{R}^{2\times2});$
	\item[$\bullet$] $\frac{d}{dt}J_t\big|_{t=0} = \dive\bth;\hspace{-.15in}\quad\qquad\qquad\qquad\quad\ \bullet\ \frac{d}{dt}M_t\big|_{t=0} = -\nabla\bth,$
\end{itemize}
where $M_t(x) = (\nabla T_t(x))^{-1}$.
\label{Tprops}
\end{lemma}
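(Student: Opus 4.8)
The plan is to exploit the fact that $T_t = \mathrm{id} + t\bth$ is \emph{affine} in $t$ with fixed coefficient $\bth\in\bTh$, so that the $t$-regularity of every quantity is essentially free, while the spatial (Hölder) regularity is inherited from that of $\bth$ together with the uniform positivity of $J_t$ supplied by Lemma~\ref{detbound}. First I would dispose of the statements that concern only the $t$-dependence: writing $T_t(x) = x + t\bth(x)$ exhibits $t\mapsto T_t$ as a first-degree polynomial in $t$ with values in the relevant function space, hence a $C^\infty$ (in particular $C^1$) curve whose derivative $\frac{d}{dt}T_t = \bth$ is constant in $t$, and the spatial regularity of $T_t$ is exactly that of the pair $(\mathrm{id},\bth)$. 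The identity $\frac{d}{dt}J_t\big|_{t=0} = \dive\bth$ is then read off directly from the explicit formula $J_t = 1 + t\,\dive\bth + t^2\det\nabla\bth$ of Lemma~\ref{detbound}(i) by differentiating this quadratic polynomial and setting $t=0$.

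For the inverse $T_t^{-1}$ the key inputs are global injectivity and the uniform lower bound on the Jacobian. For $t$ smaller than the reciprocal of the Lipschitz constant of $\bth$, the relation $T_t(x) - T_t(y) = (x-y) + t(\bth(x)-\bth(y))$ forces $T_t$ to be injective by a contraction estimate, and Lemma~\ref{detbound}(ii) gives $J_t \ge \alpha_1 > 0$ on $\overline{D}$, so $\nabla T_t$ is everywhere invertible. The inverse function theorem then yields a diffeomorphism of the stated spatial regularity with $\nabla(T_t^{-1}) = M_t\circ T_t^{-1}$, and continuity of $t\mapsto T_t^{-1}$ follows from the continuity of $t\mapsto T_t$ together with the stability of the inversion map on the set of maps whose Jacobian is uniformly bounded below, where $\alpha_1$ is used to keep all constants uniform on $[0,\tau_0]$.

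It remains to treat $J_t$, $M_t$ and $\frac{d}{dt}M_t\big|_{t=0}$. Since $\dive\bth$ and $\det\nabla\bth$ are algebraic expressions in the first derivatives of $\bth$, the formula $J_t = 1 + t\,\dive\bth + t^2\det\nabla\bth$ shows that $t\mapsto J_t$ is a $C^\infty$ curve into the Hölder space inherited from $\nabla\bth$. For $M_t = (\nabla T_t)^{-1} = (I + t\nabla\bth)^{-1}$ I would use the explicit $2\times 2$ adjugate formula $M_t = J_t^{-1}\,\mathrm{cof}(\nabla T_t)^\top$: the cofactor entries are polynomials in the entries of $\nabla T_t$, and $J_t^{-1}$ is as regular as $J_t$ because $J_t$ is bounded away from zero, so $M_t$ and $M_t^\top$ inherit the claimed regularity as products and reciprocals within a Hölder Banach algebra. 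Finally, differentiating the identity $M_t(I + t\nabla\bth) = I$ in $t$ gives $\dot M_t(I + t\nabla\bth) + M_t\nabla\bth = 0$, and evaluating at $t=0$ (where $M_0 = I$) yields $\frac{d}{dt}M_t\big|_{t=0} = -\nabla\bth$. I expect the only genuinely delicate point to be the continuity of the inverse map in $t$ with values in the Hölder space, uniformly on $[0,\tau_0]$, which is precisely where the uniform bound $\alpha_1 \le J_t$ of Lemma~\ref{detbound} is indispensable; the remaining assertions reduce to differentiating explicit low-degree polynomials in $t$ and to the Banach-algebra arithmetic of Hölder functions.
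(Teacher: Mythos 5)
Your proposal is correct in its essentials, and there is nothing in the paper to compare it against: Lemma~\ref{Tprops} is stated with a ``cf.''\ citation to \cite{zolesio2011,sokolowski1992} and no proof is given. Your argument is the standard one from those references — the affine dependence $T_t=\mathrm{id}+t\bth$ makes all $t$-regularity trivial, the formula of Lemma~\ref{detbound}(i) gives $\frac{d}{dt}J_t\big|_{t=0}=\dive\bth$ by inspection, a contraction estimate plus the inverse function theorem (with the uniform bound $J_t\ge\alpha_1$ from Lemma~\ref{detbound}(ii)) handles $T_t^{-1}$, the adjugate formula and the Banach-algebra structure of H\"older spaces handle $M_t$, and differentiating $M_t(I+t\nabla\bth)=I$ at $t=0$ gives $\frac{d}{dt}M_t\big|_{t=0}=-\nabla\bth$.

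Two caveats. First, a small genuine gap: injectivity and local invertibility of $T_t$ do not by themselves put $T_t^{-1}$ on all of $\overline{D}$; you also need $T_t(\overline{D})=\overline{D}$. This follows from the fact that $\bth=0$ on $\partial D$ (so $T_t$ fixes $\partial D$ pointwise) combined with a degree or open-and-closed argument showing $T_t(D)=D$ for small $t$; it deserves a sentence. Second — a defect of the statement rather than of your proof — with $\bth$ only in $C^{1,1}(\overline{D};\mathbb{R}^2)$, as the definition of $\bTh$ in this paper prescribes, your construction yields $T_t\in C^{1,1}$, $J_t\in C^{0,1}$ and $M_t\in C^{0,1}$, one degree short of the $C^{2,1}$ and $C^{1,1}$ claims in the lemma; those indices are carried over from the cited references, where the deformation field is taken one degree smoother. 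Your phrasing (``the H\"older space inherited from $\nabla\bth$'') is the honest and correct one, so your proof proves the lemma as it can actually hold for $\bth\in\bTh$, not literally as stated.
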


Let us also recall Hadamard's identity which will be indespensible for the discussion of the necessary conditions.
\begin{lemma}
Let $f\in C([0,\tau];W^{1,1}(D))$ and suppose that $\frac{\partial f}{\partial t}(0)\in L^1(D)$, then 
\begin{align*}
	\frac{d}{dt}\int_{\Omega_t}f(t,x)\du x\Big|_{t = 0} = \int_\Omega \frac{\partial f}{\partial t}(0,x)\du x + \int_{\Gamma_{\rm f}} f(0,x){\bth}\cdot\bn\du s.
\end{align*}
\label{hadamard}
\end{lemma}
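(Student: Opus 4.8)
The plan is to reduce the moving-domain integral to one over the fixed reference domain $\Omega$ by pulling it back through the diffeomorphism $T_t$, after which the only $t$-dependence lives in the integrand and the derivative can be taken under the integral sign. Concretely, the change of variables $x = T_t(y)$ with $\du x = J_t(y)\,\du y$ (Lemma \ref{detbound}(i)) gives
\[ \phi(t) := \int_{\Omega_t} f(t,x)\,\du x = \int_\Omega f(t,T_t(y))\,J_t(y)\,\du y, \]
so that the problem becomes the differentiation at $t=0$ of an integral over the fixed set $\Omega$.

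Next I would differentiate the pulled-back integrand by the product and chain rules. Using $\partial_t T_t(y) = \bth(y)$ together with the regularity $[t\mapsto T_t]\in C^1$ and $[t\mapsto J_t]\in C^1$ from Lemma \ref{Tprops}, and the base values $J_0 = 1$, $T_0 = \mathrm{id}$, $\frac{d}{dt}J_t\big|_{t=0} = \dive\bth$, the pointwise derivative at $t=0$ is
\[ \frac{d}{dt}\big[f(t,T_t(y))J_t(y)\big]\Big|_{t=0} = \frac{\partial f}{\partial t}(0,y) + \nabla f(0,y)\cdot\bth(y) + f(0,y)\,\dive\bth(y). \]
The last two terms combine, by the product rule for the divergence, into $\dive\!\big(f(0,\cdot)\,\bth\big)$. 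An application of the divergence theorem on the (Lipschitz) domain $\Omega$ then yields $\int_{\partial\Omega} f(0,x)\,\bth\cdot\bn\,\du s$; since $\bth$ vanishes on $\partial D\cup\Gamma_{\rm in}\cup\Gamma_{\rm w}$ and on $\Gamma_{\rm out}\subset\partial D$, only the contribution over $\Gamma_{\rm f}$ survives, which produces exactly the stated boundary term, while the remaining volume term is $\int_\Omega \frac{\partial f}{\partial t}(0,x)\,\du x$.

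The hard part will be justifying the interchange of the $t$-derivative with the integral under the weak hypotheses $f\in C([0,\tau];W^{1,1}(D))$ and $\frac{\partial f}{\partial t}(0)\in L^1(D)$, since here $\nabla f(0,\cdot)$ is merely $L^1$ and no pointwise derivative of the integrand is a priori available. I would handle this by passing to difference quotients
\[ \frac{\phi(t)-\phi(0)}{t} = \int_\Omega \frac{f(t,T_t(y))J_t(y) - f(0,y)}{t}\,\du y, \]
splitting the numerator into a purely temporal increment and a purely spatial increment along the flow, and controlling each by the uniform bounds $0<\alpha_1\le J_t\le\alpha_2$ (Lemma \ref{detbound}(ii)) and the $C^1$-in-$t$ control of $T_t$ and $J_t$ (Lemma \ref{Tprops}). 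To make the spatial difference quotient converge at $W^{1,1}$ regularity I would first establish the identity for $f(0,\cdot)\in C^\infty(\overline{D})$, where dominated convergence applies directly, and then extend to general $f$ by density of smooth functions in $W^{1,1}(D)$, using the uniform Jacobian bounds to pass to the limit on both sides. The divergence-theorem step is legitimate at this regularity because $f(0,\cdot)\,\bth\in W^{1,1}(\Omega)$ and $\Omega$ is Lipschitz (Remark \ref{lips}), so the trace on $\Gamma_{\rm f}$ is well defined.
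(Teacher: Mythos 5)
Your proposal is correct and coincides with the proof the paper actually relies on: the paper's own ``proof'' of Lemma \ref{hadamard} is nothing more than a citation of Theorem 5.2.2 and Proposition 5.4.4 of \cite{henrot2018}, and the argument given there is precisely yours --- pull the integral back to $\Omega$ via $T_t$, differentiate $f(t,T_t(y))J_t(y)$ at $t=0$ using $J_0=1$ and $\frac{d}{dt}J_t\big|_{t=0}=\dive\bth$, justify the interchange at the stated $W^{1,1}$ regularity by density of smooth functions combined with the uniform Jacobian bounds of Lemma \ref{detbound}, and convert $\int_\Omega \dive\big(f(0,\cdot)\bth\big)\du x$ into a boundary integral by the divergence theorem on the Lipschitz domain $\Omega$, where the vanishing of $\bth$ on $\partial\Omega\setminus\Gamma_{\rm f}$ leaves only the $\Gamma_{\rm f}$ contribution. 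In short, you have written out in full the textbook proof that the paper outsources to its reference.
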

\begin{proof}
See Theorem 5.2.2 and Proposition 5.4.4 of \cite{henrot2018}.\hfill
\end{proof}
\subsection{Rearrangement Method} 
In this part, we determine the shape derivative of the objective functional with respect to the variations generated by the transformation $T_t$. This approach gets rid of the tedious process of solving first the {\it shape derivative} of the state solutions, then solving the {\it shape derivative} of the objective functional.

To start with,  we consider a Hilbert space $Y(\Omega)$ and an operator
\[F:Y(\Omega)\times \Oad\to Y(\Omega)',\]
where the equation $\langle F(y,\Omega), \phi \rangle_{Y(\Omega)'\times Y(\Omega)} = 0$ corresponds to a variational problem in $\Omega.$

Suppose that the free-boundary is denoted by $\Gamma_{\rm f}\subset \partial\Omega$, the said method deals with the shape optimization 
\[ \min_{\Omega\in \Oad}J(y,\Omega):=\int_\Omega j(y)dx +\alpha \int_{\Gamma_{\rm f}}\du s. \]
subject to 
\begin{align}
 F(y,\Omega)=0\text{ in }Y(\Omega)'.\label{weakgen}
\end{align}
We define the Eulerian derivative of $J$ at $\Omega$ in the direction ${\bth}\in\bTh$ by
\[ dJ(y,\Omega){\bth} = \lim_{t\searrow0}\frac{J(y_t,\Omega_t) - J(y,\Omega)}{t},\]
where $y_t$ solves the equation $F(y_t,\Omega_t)=0$ in $Y(\Omega_t)'$. If $dJ(y,\Omega){\bth}$ exists for all ${\bth}\in\bTh$ and that $dJ(y,\Omega)$ defines a bounded linear functional on $\bTh$ then we say that $J$ is {\it shape differentiable} at $\Omega$.

The so-called rearrangement method is given as below:
\begin{lemma}[cf \cite{ito2008}]
Suppose $j\in C^{1,1}(\mathbb{R}^2,\mathbb{R})$ and that the following assumptions hold:
	\begin{pethau}
	\item[(A1)] There exists an operator $\tilde{F}:Y(\Omega)\times[0,\tau]\to Y(\Omega)'$ such that $F(y_t,\Omega_t)=0$ in $Y(\Omega_t)'$ is equivalent to 
	\begin{align}
		\tilde{F}(y^t,t) = 0\text{ in }Y(\Omega)',\label{weakbacktrack}
	\end{align}
	 with $\tilde{F}(y,0) = F(y,\Omega)$ for all $y\in Y(\Omega).$
	\item[(A2)]  Let $y,v\in Y(\Omega)$. Then $F_y(y,\Omega)\in \mathcal{L}(Y(\Omega),Y(\Omega)')$ satisfies
		\[ \langle F(v,\Omega)-F(y,\Omega)-F_y(y,\Omega)(v-y),z\rangle_{Y(\Omega)'\times Y(\Omega)} = \mathcal{O}(\|v-y \|_{Y(\Omega)}^2),\]
		for all $z\in Y(\Omega). $
	\item[(A3)] Let  $y\in Y(\Omega)$ be the unique solution of \eqref{weakgen}. Then for any $f\in Y(\Omega)'$  the solution of the following linearized equation exists:
	\begin{align*}
		\langle F_y(y,\Omega)\delta y,  z\rangle_{Y(\Omega)\times Y(\Omega)'} = \langle f,  z\rangle_{Y(\Omega)\times Y(\Omega)'} \text{ for all }z\in Y(\Omega).
	\end{align*}
	\item[(A4)] Let $y^t,y\in Y(\Omega)$ be the solutions of \eqref{weakbacktrack} and \eqref{weakgen}, respectively. Then $\tilde F$ and $F$ satisfy
	\[\lim_{t\searrow 0}\frac{1}{t}\langle \tilde F(y^t,t) - \tilde{F}(y,t) -F(y^t,\Omega) + F(y,\Omega) ,z\rangle_{Y(\Omega)'\times Y(\Omega)} = 0\]
	for all $z\in Y(\Omega). $
\end{pethau}
Let $y\in Y(\Omega)$ be the solution of \eqref{weakgen}, and suppose that the adjoint equation,  for all $z\in Y(\Omega)$
\begin{align}
\langle F_y(y,\Omega)z,p\rangle_{Y(\Omega)'\times Y(\Omega)} = (j'(y),z)_{L^2(\Omega)} 
\label{weakadjoint}
\end{align}
has a unique solution $p\in Y(\Omega)$.  Then the Eulerian derivative of $J$ at $\Omega$ in the direction ${\bth}\in\bTh$ exists and is given by
\begin{align}
\begin{aligned}
dJ(y,\Omega){\bth} =&\, -\frac{d}{dt}\langle \tilde{F}(y,t), p\rangle_{Y(\Omega)'\times Y(\Omega)}\Big|_{t=0}\\ &\, + \int_\Omega j(y)\dive{\bth} \du x + \alpha\int_{\Gamma_{\rm f}}\kappa {\bth}\cdot\bn \du s,
\end{aligned}
\label{necessaryconditiongen}
\end{align}
where $\kappa$ is the mean curvature of $\Gamma_{\rm f}$.
\label{rearrangement}
\end{lemma}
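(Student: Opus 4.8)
The plan is to isolate the three contributions appearing in \eqref{necessaryconditiongen} by differentiating the volume integral and the boundary integral separately, and then to eliminate the (a priori unknown) material derivative of the state by means of the adjoint variable $p$ together with the structural assumptions (A1)--(A4). First I would write
\[
J(y_t,\Omega_t)-J(y,\Omega)=\left[\int_{\Omega_t}j(y_t)\du x-\int_\Omega j(y)\du x\right]+\alpha\left[\int_{T_t(\Gamma_{\rm f})}\du s-\int_{\Gamma_{\rm f}}\du s\right].
\]
For the perimeter part I would invoke the classical shape derivative of the surface measure (the tangential-divergence formula in \cite{henrot2018}), giving $\frac{d}{dt}\int_{T_t(\Gamma_{\rm f})}\du s\big|_{t=0}=\int_{\Gamma_{\rm f}}\kappa\,{\bth}\cdot\bn\,\du s$ and hence the last term. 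For the volume part I would change variables $x=T_t(X)$ to pull the integral back onto the fixed reference domain $\Omega$, obtaining $\int_\Omega j(y^t)J_t\,\du X$ with $y^t:=y_t\circ T_t$ the rearranged state and $J_t=\det\nabla T_t$, and then split
\[
\int_\Omega\big[j(y^t)J_t-j(y)\big]\du X=\int_\Omega\big[j(y^t)-j(y)\big]J_t\,\du X+\int_\Omega j(y)\big(J_t-1\big)\du X.
\]
The second summand is purely geometric: dividing by $t$ and using $\frac{d}{dt}J_t\big|_{t=0}=\dive{\bth}$ from Lemma \ref{Tprops} yields exactly $\int_\Omega j(y)\dive{\bth}\,\du x$.

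The heart of the argument is the first summand, and the key feature of the rearrangement method is that the material derivative $\dot y$ is never computed explicitly. First I would establish the Lipschitz bound $\|y^t-y\|_{Y(\Omega)}=O(t)$ as $t\searrow0$; this follows by writing $0=\tilde F(y^t,t)-\tilde F(y,0)$, splitting it into a $y$-increment and a $t$-increment, and inverting the linearization $F_y(y,\Omega)=\tilde F_y(y,0)$ via the solvability (A3) together with the quadratic estimate (A2) and the $C^1$-in-$t$ regularity of $\tilde F$ granted by (A1). Given this bound, the $C^{1,1}$ regularity of $j$ gives $|j(y^t)-j(y)-j'(y)(y^t-y)|\le C|y^t-y|^2$, so that, using $J_t\to1$,
\[
\frac1t\int_\Omega\big[j(y^t)-j(y)\big]J_t\,\du X=\frac1t\big(j'(y),y^t-y\big)_{L^2(\Omega)}+O\!\left(\frac{\|y^t-y\|^2}{t}\right)=\frac1t\big(j'(y),y^t-y\big)_{L^2(\Omega)}+o(1).
\]

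It then remains to evaluate $\frac1t(j'(y),y^t-y)_{L^2(\Omega)}$ through the adjoint. Testing the adjoint equation \eqref{weakadjoint} with $z=y^t-y\in Y(\Omega)$ gives $(j'(y),y^t-y)_{L^2(\Omega)}=\langle F_y(y,\Omega)(y^t-y),p\rangle_{Y(\Omega)'\times Y(\Omega)}$. By (A2) and the fact that $y$ solves \eqref{weakgen}, i.e.\ $F(y,\Omega)=0$, this equals $\langle F(y^t,\Omega),p\rangle_{Y(\Omega)'\times Y(\Omega)}$ up to a term $O(\|y^t-y\|^2)$, which disappears after division by $t$. Finally I would use the compatibility (A4): since $y^t$ solves the backtracked equation \eqref{weakbacktrack}, $\tilde F(y^t,t)=0$, and again $F(y,\Omega)=0$, (A4) collapses to $\frac1t\langle F(y^t,\Omega),p\rangle\to-\frac1t\langle\tilde F(y,t),p\rangle$, while $\tilde F(y,0)=F(y,\Omega)=0$ from (A1) turns the right-hand side into $-\frac{d}{dt}\langle\tilde F(y,t),p\rangle\big|_{t=0}$. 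Collecting the three limits produces \eqref{necessaryconditiongen}, and boundedness and linearity of $\bth\mapsto dJ(y,\Omega)\bth$ follow from the linear dependence of each term on $\bth$ through $\dive\bth$, $\bth\cdot\bn$ and $\partial_t\tilde F$. I expect the main obstacle to be the rigorous justification of the estimate $\|y^t-y\|=O(t)$ and the interchange of limit and $t$-differentiation of $\tilde F$, since these are precisely what convert the formal chain rule into a valid computation; the adjoint bookkeeping, by contrast, is purely algebraic once that bound is secured.
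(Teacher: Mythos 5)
Your proposal reconstructs, correctly and in the right order, the standard proof of the rearrangement method: split $J$ into volume and perimeter parts, pull the volume integral back to $\Omega$ via $T_t$, use the geometric limits of Lemma \ref{Tprops} for the $\int_\Omega j(y)\dive{\bth}\du x$ term and the tangential-divergence (curvature) formula for the perimeter term, then eliminate the material derivative of the state by testing the adjoint equation \eqref{weakadjoint} with $z=y^t-y$ and chaining (A2), $F(y,\Omega)=0$, $\tilde F(y^t,t)=0$ and (A4) to arrive at $-\frac{d}{dt}\langle\tilde F(y,t),p\rangle\big|_{t=0}$. This is precisely the route of Ito--Kunisch--Peichl \cite{ito2008}; note that the paper itself does not reprove this lemma but only cites that reference, so your argument is in effect a reconstruction of the cited proof, and the adjoint bookkeeping in it is sound.

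The one point where you deviate, and where your sketch would not go through as written, is the claimed Lipschitz bound $\|y^t-y\|_{Y(\Omega)}=O(t)$. The assumptions (A1)--(A4) as stated do not deliver this: (A1) asserts only the equivalence of $\tilde F(\cdot,t)=0$ with the perturbed problem and gives no $C^1$-in-$t$ regularity of $\tilde F$, and (A3) asserts solvability of the linearized equation, not a $t$-uniformly bounded inverse, so the implicit-function-type argument you outline lacks its hypotheses. Fortunately you do not need $O(t)$: the estimate enters only to kill the quadratic remainders $\mathcal{O}(\|y^t-y\|_{Y(\Omega)}^2)/t$, and for that the weaker bound $\|y^t-y\|_{Y(\Omega)}=o(t^{1/2})$ suffices, since $o(t^{1/2})^2/t=o(1)$. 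That weaker bound is exactly what the paper's Lemma \ref{holder} (again cited from \cite{ito2008}) extracts from (A3) alone, so you should invoke it rather than attempt the Lipschitz estimate. With that substitution, and with the remark that the existence of $\frac{d}{dt}\langle\tilde F(y,t),p\rangle\big|_{t=0}$ is guaranteed by the smoothness of $t\mapsto T_t$ recorded in Lemma \ref{Tprops}, your proof is complete and agrees with the cited one.
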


Before we go further, we note that in usual practice the element $y^t \in Y(\Omega)$ is solved using the composition $y^t : = y_t\circ T_t$, and utilizes the function space parametrization (see \cite[Chapter 10 Section 2.2]{zolesio2011} for example, for further details). Furthermore, assumption {\it (A3)} is originally written as the H{\"o}lder continuity of solutions $y^t\in Y(\Omega)$ of \eqref{weakbacktrack} with respect to the time parameter $t\in[0,\tau]$.  Fortunately, Ito, K., et.al.\cite{ito2008}, have shown that assumption {\it (A3)} implies the continuity. We cite the said result in the following lemma.

\begin{lemma}
Suppose that $y\in Y(\Omega)$ solves \eqref{weakgen} and $y^t\in Y(\Omega)$ is the solution to \eqref{weakbacktrack}. Assume furthermore that {\it (A3)} holds. Then, $\|y^t-y \|_{X(\Omega)} = o(t^\frac{1}{2})$ as $t\searrow 0$.
\label{holder}
\end{lemma}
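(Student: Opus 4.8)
The plan is to linearize the transported equation \eqref{weakbacktrack} about the unperturbed solution and to convert the invertibility encoded in assumption (A3) into a quantitative bound. Write $e_t := y^t - y\in Y(\Omega)$, where $y$ solves \eqref{weakgen} and $y^t$ solves \eqref{weakbacktrack}. By (A1) we have $\tilde F(y,0)=F(y,\Omega)=0$ and $\tilde F(y^t,t)=0$, so bringing the transported operator back to the reference operator at $t=0$,
\[ F(y^t,\Omega) = \tilde F(y^t,0) = \tilde F(y^t,0) - \tilde F(y^t,t) = -\bigl(\tilde F(y^t,t) - \tilde F(y^t,0)\bigr). \]
The right-hand side is exactly the increment in the time-slot of $\tilde F$ at the \emph{fixed} argument $y^t$. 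Because the transported operator depends on $t$ only through the quantities $J_t$ and $M_t$, whose regularity is recorded in Lemma \ref{Tprops}, the map $t\mapsto\tilde F(v,t)$ is Lipschitz on $[0,\tau]$ uniformly for $v$ in bounded subsets of $Y(\Omega)$. Using the uniform bounds $0<\alpha_1\le J_t\le\alpha_2$ of Lemma \ref{detbound}, the family $\{y^t\}$ admits a uniform energy bound as in Theorem \ref{th:wp}, and therefore $\|F(y^t,\Omega)\|_{Y(\Omega)'}\le\omega(t)$ with $\omega(t)=O(t)$.

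Next I would turn this residual estimate into a bound on $e_t$. Testing the Taylor-type remainder of (A2) with $v=y^t$ and using $F(y,\Omega)=0$ gives, for all $z\in Y(\Omega)$,
\[ \langle F_y(y,\Omega)e_t, z\rangle_{Y(\Omega)'\times Y(\Omega)} = \langle F(y^t,\Omega), z\rangle_{Y(\Omega)'\times Y(\Omega)} - \mathcal{O}\bigl(\|e_t\|_{Y(\Omega)}^2\bigr). \]
Assumption (A3) states that $F_y(y,\Omega)$ maps onto $Y(\Omega)'$; together with the uniqueness of the solution of \eqref{weakgen} this makes $F_y(y,\Omega)$ an isomorphism, so there is $c>0$ with $\|e_t\|_{Y(\Omega)}\le c\,\|F_y(y,\Omega)e_t\|_{Y(\Omega)'}$. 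Taking the supremum over $\|z\|_{Y(\Omega)}\le 1$ and combining the two displays yields the scalar inequality
\[ \|e_t\|_{Y(\Omega)} \le c\bigl(\omega(t) + \|e_t\|_{Y(\Omega)}^2\bigr). \]

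Finally I would run a bootstrap on this inequality, and here lies the genuine obstacle: the relation $x\le c(\omega(t)+x^2)$ admits both a small and a large root, so on its own it does not force the favourable bound. To select the small branch one must first show $e_t\to 0$ in $Y(\Omega)$ as $t\searrow0$; this follows from the uniform a priori bound on $\{y^t\}$, the fact that the time increment $\tilde F(y^t,t)-\tilde F(y^t,0)$ tends to $0$ in $Y(\Omega)'$, and the invertibility of (A3). Once $\|e_t\|_{Y(\Omega)}<(2c)^{-1}$ for small $t$, the quadratic term is absorbed, $c\|e_t\|_{Y(\Omega)}^2\le\tfrac12\|e_t\|_{Y(\Omega)}$, leaving $\|e_t\|_{Y(\Omega)}\le 2c\,\omega(t)=O(t)$. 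Since $t=o(t^{1/2})$ as $t\searrow0$, this in particular gives $\|y^t-y\|_{Y(\Omega)}=o(t^{1/2})$, which is precisely the estimate needed so that quadratic remainders divided by $t$ vanish in the Eulerian-derivative computation of Lemma \ref{rearrangement}; the stated bound in $X(\Omega)$ then follows, $Y(\Omega)$ being the solution space. The delicate point throughout is the uniform-in-$t$ control of both the linearization remainder in (A2) and the time increment of $\tilde F$, which is exactly where the structural regularity of the transported Stokes operator from Lemma \ref{Tprops} enters.
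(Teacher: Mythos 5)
First, a point of reference: the paper does not prove this lemma at all --- it is quoted verbatim from Ito--Kunisch--Peichl \cite{ito2008}. So your attempt must be judged against the known argument in that framework, and measured that way it contains genuine gaps rather than being a mere variant.

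The most serious gap is your passage from (A3) to the estimate $\|e_t\|_{Y(\Omega)}\le c\,\|F_y(y,\Omega)e_t\|_{Y(\Omega)'}$. Assumption (A3) asserts only \emph{existence} of solutions of the linearized equation, i.e.\ surjectivity of $F_y(y,\Omega)$; your claim that surjectivity ``together with the uniqueness of the solution of \eqref{weakgen}'' yields an isomorphism is a non sequitur: uniqueness for the \emph{nonlinear} equation does not transfer to injectivity of its linearization, and without injectivity (plus closed range, i.e.\ a bound from below) the displayed coercive estimate --- on which your entire bootstrap rests --- is unavailable. For the concrete Stokes operator this bound does hold, but it comes from coercivity of $a(\cdot,\cdot)_\Omega$ and the inf-sup condition on $b(\cdot,\cdot)_\Omega$, not from the inference you give. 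A second gap is your replacement of (A4), which you never use, by the claim that $t\mapsto\tilde F(v,t)$ is Lipschitz in the $Y(\Omega)'$-norm uniformly on bounded sets, giving $\|F(y^t,\Omega)\|_{Y(\Omega)'}=O(t)$. This property is not among the hypotheses (A1)--(A4), and it actually fails in the paper's own setting: the transported operator contains the term $(J_t\blf^t,\bphi)_\Omega$ with $\blf^t=\blf\circ T_t$, and for $\blf$ merely in $L^2(D)^2$ (the paper's standing assumption) one only has $\|\blf\circ T_t-\blf\|_{L^2}\to 0$ with \emph{no rate}, so your $\omega(t)=O(t)$ degrades to $o(1)$ and the conclusion $o(t^{1/2})$ is lost. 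The whole design of (A4) is to avoid exactly this: in the combination $\tilde F(y^t,t)-\tilde F(y,t)-F(y^t,\Omega)+F(y,\Omega)$ all solution-independent terms (those involving $\blf$ and $\bg$) cancel, whereas in your decomposition $F(y^t,\Omega)=-\bigl(\tilde F(y^t,t)-\tilde F(y^t,0)\bigr)$ they survive. Finally, you correctly identify that the quadratic inequality $x\le c(\omega(t)+x^2)$ has two branches and that one must first know $e_t\to 0$, but your proposed justification of this convergence is circular: it appeals to the same invertibility-plus-residual reasoning, which again cannot absorb the term $c\|e_t\|^2$ when $\|e_t\|$ is only known to be bounded, not small. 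In the abstract setting this requires a separate argument (e.g.\ uniform bounds, weak convergence of $y^t$, and uniqueness of the limit); in the Stokes application it is immediate only because the operator is affine, so the quadratic remainder in (A2) vanishes identically and no bootstrap is needed at all --- a simplification your write-up never invokes.
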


We applying the rearrangement method to the velocity-pressure operator $E(\cdot)_\Omega:X(\Omega)\to X(\Omega)' $ defined by
\begin{align*}
\langle E({\bu},p)_\Omega,(\bphi, \psi) \rangle_{X(\Omega)'\times X(\Omega)} = &\,  \nu a(\bu,\bphi)_\Omega + b(\bphi,p)_\Omega + b({\bu},q)_\Omega \\ &- (\blf,\bphi)_\Omega + a(\bg,\bphi)_\Omega
\end{align*} 
where $X(\Omega) := \bH^1_{\Gamma_0}(\Omega)\times L^2(\Omega)$.

According to Theorem \ref{th:wp},  there exists a unique $(\tilde\bu, p)\in X(\Omega)$ such that for any $(\bphi, \psi)\in X(\Omega)$ 
\begin{align}
 \langle E(\tilde\bu,p)_\Omega,(\bphi, \psi) \rangle_{X(\Omega)'\times X(\Omega)} = 0.\label{operator}
 \end{align}

\begin{notation} {Moving forward we shall use the following notations $X = X(\Omega)$,  $X_t = X(\Omega_t) $.}\end{notation}

Our goal is to characterize, and of course show the existence of the Eulerian derivative of the objective functional 
\begin{align}
\mathcal{G}({\bu},\Omega) =  \alpha\int_{\Gamma_{\rm f}} \du s-\int_{\Omega} \frac{\gamma_1}{2}|\nabla\times {\bu}|^2 + \gamma_2 g(\mathrm{det}(\nabla {\bu})) \du x,
\end{align}
where ${\bu} = \tilde\bu + \bg$, and $\tilde{\bu}\in \bH_{\Gamma_0}^1(\Omega)$ is the first component of the solution of \eqref{operator}.

From the deformation field $T_t$, we let $(\tilde\bu_t,p_t)\in X_t$ be the solution of the equation
\begin{align} 
\langle E(\tilde\bu_t,p_t)_{\Omega_t},(\bphi_t, \psi_t) \rangle_{X_t'\times X_t} = 0, \quad\forall(\bphi_t, \psi_t)\in X_t.
\label{operatoront}
\end{align}
By perturbing the equation \eqref{operatoront} back to the reference domain $\Omega$ we get the operator $\tilde{E}:X\times[0,\tau]\to X'$ defined by
\begin{align}
\begin{aligned}
\langle\tilde{E}((\bu,p),t),(\bphi,\psi) \rangle_{X'\times X}  &=\nu a_t({\bu},\bphi) + b_t(\bphi,p)  + b_t({\bu},\psi) \\ &-  (J_t\blf^t,\bphi)_{\Omega}+\nu a_t({\bg},\bphi)
\end{aligned}
\end{align}
where $\blf^t = \blf\circ T_t\in L^2(\Omega)^2 $ and -- by denoting $(M_t^\top)_k$ the $k^{th}$ row of $M_t^\top$,  $\bv_k$ the $k^{th}$ component of a vector $\bv$, and by using Einstein convention over $k=1,2$ -- the bilinear forms are defined as follows
\begin{align*}
 a_t({\bu},\bphi) =&\, (J_tM_t\nabla{\bu},M_t\nabla\bphi)_\Omega,\\
b_t(\bv,\psi) = &\, -(J_t \psi, (M_t^\top)_k\nabla\bv_k)_\Omega.
\end{align*}
Using similar arguments as in \cite{ito2008}, it can be easily shown that $E(\cdot,\cdot)_\Omega$ and $\tilde{E}$ satisfy (A1) and (A4).

Furthermore, the Fr{\'e}chet derivative $E'\in\mathcal{L}(X,\mathcal{L}(X,X'))$ of the velocity-pressure operator at a point $({\bu},p)\in X$ can be easily obtained as 
\[ 
	\langle E'({\bu},p)(\delta{\bu},\delta p), (\bphi,\psi) \rangle_{X'\times X} = \nu a(\delta{\bu},\bphi)_\Omega + b(\bphi,\delta p)_\Omega + b(\delta{\bu},q)_\Omega,
\]
for all $(\bphi,\psi)\in X$.  Due to the linearity, we further infer that 
\[
	\langle E({\bv},q)_\Omega -E({\bu},p)_\Omega-  E'({\bu},p)({\bv}-{\bu},q-p), (\bphi,\psi) \rangle_{X'\times X} = 0
\]
for any $({\bv},q),({\bu},p),(\bphi,\psi)\in X$.  From the coercivity of $a(\cdot,\cdot)_{\Omega}$, and because $b(\cdot,\cdot)_\Omega$ satisfies the inf-sup condition,  then for any $\Phi\in X'$ the variational problem
\[ \langle E'(\tilde{\bu},p)(\delta{\bu},\delta p), (\bphi,\psi) \rangle_{X'\times X} = \langle \Phi, (\bphi,\psi) \rangle_{X'\times X},\quad\forall(\bphi,\psi)\in X,\]
is well-posed. These imply that the operators $E(\cdot,\cdot)_\Omega$ and $E'({\bu},p)$ satisfy assumptions { (A2) and (A3)}. 


We now characterize the Eulerian derivative of $\mathcal{G}$ by utilizing Lemma \ref{rearrangement}.
\begin{theorem}
	Suppose that $\Omega\subset D$ is a domain that is of class $C^{1,1}$,  and let $(\tilde\bu,p)\in (\bH_{\Gamma_{0}}^1(\Omega)\cap H^2(\Omega)^2)\times L^2(\Omega)$ be the solution to \eqref{operator}.  Then, the Eulerian derivative of $\mathcal{G}$ exists and is given by
	\begin{align*}
		d\mathcal{G}((\bu,p),\Omega){\bth} = &\,\int_{\Gamma_{\rm f}}\bigg[ \alpha \kappa - \frac{\gamma_1}{2}|\nabla\times{\bu}|^2 - \gamma_2 h(\mathrm{det}(\nabla{\bu}))\\
		& + \frac{\partial{\bu}}{\partial{\bn}}\left(\frac{\partial{\bv}}{\partial{\bn}} + \gamma_1(\nabla\times{\bu}){\btau} + \gamma_2P({\bu}) \right)  \bigg]{\bth}\cdot{\bn}\du s,
	\end{align*}
	where $\btau$ is the unit tangential vector on $\Gamma_{\rm f}$, ${\bu} = \tilde{\bu}+{\bg}\in H^2(\Omega)^2$, $(\bv,\pi)\in X$ solves the adjoint equation
	\begin{align}
		\langle E'(\tilde{\bu},p)({\bphi},\psi),({\bv},\pi) \rangle_{X'\times X} =  -(\gamma_1\vec{\nabla}\times(\nabla\times{\bu}) + \gamma_2R(\bu),{\bphi})_{\Omega}, 
	\label{wkadjoint}
	\end{align}
	for any $(\bphi,\psi)\in X$, 
		\[ 
		R(\bu) = 	\begin{bmatrix}
							-\nabla\times(h'(\mathrm{det}(\nabla{\bu}))\nabla u_2 )\\
							\nabla\times(h'(\mathrm{det}(\nabla{\bu}))\nabla u_1 )
						\end{bmatrix}		\!, P(\bu) = 	\begin{bmatrix}
							h'(\mathrm{det}(\nabla{\bu}))\left(\frac{\partial u_2}{\partial x_2}n_{x_1} - \frac{\partial u_2}{\partial x_1}n_{x_2} \right) \\
							h'(\mathrm{det}(\nabla{\bu}))\left(\frac{\partial u_1}{\partial x_1}n_{x_2} - \frac{\partial u_1}{\partial x_2}n_{x_1} \right) 
						\end{bmatrix}		
	\]
	 and the curl of a scalar valued function is defined by $\vec{\nabla}\times \psi = \left(\frac{\partial \psi}{\partial x_2}, -\frac{\partial \psi}{\partial x_1}\right)^\top.$
	\label{th:neccond}
\end{theorem}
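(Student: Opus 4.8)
The plan is to apply the rearrangement method of Lemma~\ref{rearrangement} to the velocity--pressure operator $E(\cdot,\cdot)_\Omega$ of \eqref{operator}, taking as state $y=(\tilde\bu,p)$, as constraint $F=E$ together with its pull-back $\tilde F=\tilde E$, and as distributed integrand
\[
j(\bu) = -\tfrac{\gamma_1}{2}\,|\nabla\times\bu|^2 - \gamma_2\,h(\mathrm{det}(\nabla\bu)).
\]
Hypotheses (A1)--(A4) for $E$ and $\tilde E$ have already been verified in the discussion preceding the statement, and the Fr\'echet derivative $E'(\tilde\bu,p)$ inherits the coercivity of $a(\cdot,\cdot)_\Omega$ and the inf--sup property of $b(\cdot,\cdot)_\Omega$; hence the adjoint problem \eqref{wkadjoint} is well-posed by the argument of Theorem~\ref{th:wp} and admits a unique $(\bv,\pi)\in X$. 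The assumed $H^2$-regularity of $\tilde\bu$, together with the analogous regularity of the adjoint, is what licenses the integrations by parts below.

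First I would linearize the distributed integrand. Writing the scalar curl as $\nabla\times\bu=\partial_{x_1}u_2-\partial_{x_2}u_1$ and $\mathrm{det}(\nabla\bu)=\partial_{x_1}u_1\,\partial_{x_2}u_2-\partial_{x_2}u_1\,\partial_{x_1}u_2$, the G\^ateaux derivative of $\int_\Omega j(\bu)\,\du x$ in a direction $\bphi$ is
\[
-\gamma_1\!\int_\Omega(\nabla\times\bu)(\nabla\times\bphi)\,\du x - \gamma_2\!\int_\Omega h'(\mathrm{det}(\nabla\bu))\,\tfrac{d}{ds}\mathrm{det}(\nabla(\bu+s\bphi))\big|_{s=0}\,\du x.
\]
Integrating by parts --- using the identity $\int_\Omega(\nabla\times\bu)(\nabla\times\bphi)\,\du x=\int_\Omega\vec{\nabla}\times(\nabla\times\bu)\cdot\bphi\,\du x+\int_{\partial\Omega}(\nabla\times\bu)(\bphi\cdot\btau)\,\du s$ for the curl term and the product rule for the determinant term --- splits this derivative into an interior part equal to $-(\gamma_1\vec{\nabla}\times(\nabla\times\bu)+\gamma_2 R(\bu),\bphi)_\Omega$, which is exactly the right-hand side of \eqref{wkadjoint}, plus boundary contributions on $\partial\Omega$. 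Since $\bth$ vanishes on $\partial\Omega\setminus\Gamma_{\rm f}$, only the $\Gamma_{\rm f}$-traces survive, and these are precisely the summands $\gamma_1(\nabla\times\bu)\btau$ and $\gamma_2 P(\bu)$ of the final formula.

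Next I would evaluate the structural term $-\frac{d}{dt}\langle\tilde E((\bu,p),t),(\bv,\pi)\rangle|_{t=0}$ appearing in \eqref{necessaryconditiongen}. By Lemma~\ref{Tprops} we have $\frac{d}{dt}J_t|_{t=0}=\dive\bth$ and $\frac{d}{dt}M_t|_{t=0}=-\nabla\bth$, so differentiating the transported forms $a_t,b_t$ and the transported datum $J_t\blf^t$ produces volume integrals over $\Omega$ whose integrands pair $\nabla\bu,\nabla\bv,p,\pi$ and $\blf$ against $\dive\bth$ and $\nabla\bth$. Adjoining the transport term $\int_\Omega j(\bu)\dive\bth\,\du x$ and the curvature term $\alpha\int_{\Gamma_{\rm f}}\kappa\,\bth\cdot\bn\,\du s$ supplied by Lemma~\ref{rearrangement}, I would integrate by parts once more; the resulting strong-form interior residuals cancel by virtue of the state equation \eqref{operator} and the adjoint equation \eqref{wkadjoint}, so that every distributed contribution is converted into an integral over $\Gamma_{\rm f}$.

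The concluding step, which I expect to be the main obstacle, is the boundary bookkeeping. On $\Gamma_{\rm f}$ the no-slip conditions give $\bu=\bg=0$ and $\bv=0$, so all tangential derivatives vanish and $\nabla\bu=\frac{\partial\bu}{\partial\bn}\otimes\bn$, $\nabla\bv=\frac{\partial\bv}{\partial\bn}\otimes\bn$. Substituting these identities collapses the many boundary terms produced above into the single product $\frac{\partial\bu}{\partial\bn}\big(\frac{\partial\bv}{\partial\bn}+\gamma_1(\nabla\times\bu)\btau+\gamma_2 P(\bu)\big)$, while the transport term yields $j(\bu)\,\bth\cdot\bn$ and the perimeter term yields $\alpha\kappa\,\bth\cdot\bn$; collecting these reproduces the asserted integrand. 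I anticipate the determinant to be the most delicate piece: its linearization and the ensuing integration by parts generate several terms, and one must check that their interior parts assemble exactly into $R(\bu)$ while their $\Gamma_{\rm f}$-traces assemble into $P(\bu)$, and that the pressure, incompressibility and $\blf$ couplings cancel so that only the stated boundary integrand remains.
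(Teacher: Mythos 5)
Your proposal is correct and follows essentially the same route as the paper's proof: both apply the rearrangement method of Lemma \ref{rearrangement} to the operator pair $E$, $\tilde E$ with the adjoint \eqref{wkadjoint} (whose unique solvability is argued exactly as you describe), and both collapse every contribution onto $\Gamma_{\rm f}$ using the no-slip conditions $\bu=\bv=0$ there, so that $\nabla\bu^\top\bth$ reduces to $(\bth\cdot\bn)\,\partial_{\bn}\bu$ and the curl/determinant boundary residues assemble into $\gamma_1(\nabla\times\bu)\btau$ and $\gamma_2P(\bu)$. The only difference is computational rather than conceptual: the paper evaluates the structural term $\frac{d}{dt}\langle \tilde{E}((\tilde{\bu},p),t),(\bv,\pi)\rangle\big|_{t=0}$ by pushing the pulled-back form onto $\Omega_t$ and invoking Hadamard's identity (Lemma \ref{hadamard}), which produces the $\Gamma_{\rm f}$-flux terms and the material-derivative test functions directly, whereas you differentiate the transported bilinear forms in place via Lemma \ref{Tprops} and then integrate by parts; the two computations are equivalent and end in the same bookkeeping.
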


\begin{remark}
	In strong form, we can write the adjoint equation \eqref{wkadjoint} as follows:

		\begin{align}
	\left\{
	\begin{aligned}
	\, - \nu\Delta{\bv} +  \nabla\pi
	&= -[\gamma_1\vec{\nabla}\times(\nabla\times{\bu})+\gamma_2R({\bu})] && \text{ in } \Omega,\\	
	\, \Div{\bv} &= 0 && \text{ in } \Omega,\\ 		
	\, {\bv} &= 0 && \text{ on }\partial\Omega\backslash\Gamma_{\rm out},\\
	\, -\pi{\bn} + \nu \partial_{\bn}{\bv} & = 0&&\text{ on }\Gamma_{\rm out}.
	\end{aligned} \right.
	\label{adjoint}
\end{align}
\end{remark}

\noindent{\it Proof of Theorem \ref{th:neccond} } From the symmetry of the operator $E'(\tilde{\bu},p)\in \mathcal{L}(X',X)$, the unique existence of the solution to the adjoint equation \eqref{wkadjoint} can be established easily using the same arguments as in \eqref{weak} -- this of course uses the fact that $\bu\in H^2(\Omega)^2$, and hence the well-definedness of the right-hand side of \eqref{wkadjoint}.  Thus, the existence of the Eulerian derivative of $\mathcal{G}$ is assured.

We now characterize the derivative $d\mathcal{G}(({\bu},p),\Omega){\bth}$. We begin by solving for the expression $\frac{d}{dt}\langle \tilde{E}((\tilde{\bu},p),t), ({\bv},\pi)\rangle_{X'\times X}\big|_{t=0}$.
By pushing $\tilde{E}(({\bu},p),t)$ toward the perturbed domain $\Omega_t$ and by using Lemma \ref{hadamard} yield the following
\begin{align*}
	\frac{d}{dt}\langle \tilde{E}((\tilde{\bu},p),t), &({\bv},\pi)\rangle_{X'\times X}\Big|_{t=0}\\
	= &\, [\nu(\nabla{\bu}:\nabla{\bv},{\bth}\cdot{\bn})_{\Gamma_{\rm f}} - (p\dive{\bv},{\bth}\cdot{\bn})_{\Gamma_{\rm f}} - (\psi\dive{\bu},{\bth}\cdot{\bn})_{\Gamma_{\rm f}}] \\
	& + [\nu a({\bu},\phi_v)_\Omega + b(\phi_v,p)_\Omega + b({\bu},\phi_{\pi})_\Omega - (\blf\phi_v)_\Omega]\\
	& + [\nu a(\phi_u,{\bv})_\Omega + b(\phi_u,\pi)_\Omega + b({\bv},\phi_q)_\Omega]
	\end{align*}
where $\phi_u = -\nabla{\bu}^\top{\bth}$, $\phi_v = -\nabla{\bv}^\top{\bth}$,  $\phi_p = -\nabla p\cdot{\bth}$, and $\phi_\pi = -\nabla\pi\cdot{\bth}$, which were generated by using the identity $\frac{d}{dt}(\varphi\circ T_t^{-1}) = -\nabla\varphi\cdot{\bth},$ for $\phi\in H^1(D)$ \cite{sokolowski1992}.  
Using Green's identities, the divergence-free property of the variables ${\bu}$ and ${\bv}$ and Green's curl identity \cite[Theorem 3.29]{monk2003}, we obtain the following derivative
\begin{align*}
	\frac{d}{dt}\langle \tilde{E}&((\tilde{\bu},p),t), ({\bv},\pi)\rangle_{X'\times X}\Big|_{t=0}\\
					=&\, -\left(\nu\partial_{\bn}{\bu} \cdot\partial_{\bn}{\bv} ,{\bth}\cdot{\bn}\right)_{\Gamma_{\rm f}} + \gamma_1 \Big[(j_1'({\bu}),\nabla{\bu}^\top{\bth})_{\Omega} - ((\nabla\times{\bu}){\btau}\cdot\partial_{\bn}{\bu},{\bth}\cdot{\bn})_{\Gamma_{\rm f}}\Big]\\
					&+\gamma_2\Big[ (j_2'({\bu}),\nabla{\bu}^\top{\bth})_{\Omega} - (P({\bu})\cdot\partial_{\bn}{\bu},{\bth}\cdot{\bn})_{\Gamma_{\rm f}}\Big].
\end{align*}

Finally,  we now use \eqref{necessaryconditiongen} to determine the derivative $d\mathcal{G}((\bu,p),\Omega){\bth}$ to get
\begin{align*}
	d\mathcal{G}((\bu,p)&,\Omega){\bth}  =   \big( \partial_{\bn}{\bu}\cdot(\nu\partial_{\bn}{\bv} +(\nabla\times{\bu}){\btau} + P({\bu})  ), {\bth}\cdot{\bn}\big)_{\Gamma_{\rm f}}\\ 
	& +\alpha(\kappa, {\bth}\cdot\bn)_{\Gamma_{\rm f}} - \frac{\gamma_1}{2}\big[2(j_1'({\bu}),\nabla{\bu}^\top{\bth})_{\Omega}+ ( j_1({\bu}),\dive{\bth} )_{\Omega}\big]\\
	& -  \gamma_2\big[(j_2'({\bu}),\nabla{\bu}^\top{\bth})_{\Omega}+ (j_2({\bu}),\dive{\bth})_\Omega\big].
\end{align*}
Before we write the final form of the shape derivative, we note that
\begin{align*}
	\int_{\Omega}\dive(j_1({\bu}){\bth})\du x = 2(j_1'({\bu}),\nabla{\bu}^\top{\bth})_{\Omega} + ( j_1({\bu}),\dive{\bth} )_{\Omega},
\end{align*}
and 
\begin{align*}
	\int_{\Omega}\dive(j_2({\bu}){\bth})\du x = (j_2'({\bu}),\nabla{\bu}^\top{\bth})_{\Omega} + ( j_2({\bu}),\dive{\bth} )_{\Omega}.
\end{align*}
Hence, we have the following form
\begin{align*}
	d\mathcal{G}((\bu,p),\Omega){\bth}  =&\,  \int_{\Gamma_{\rm f}}\left[\alpha\kappa + \frac{\partial{\bu}}{\partial{\bn}}\cdot\left(\frac{\partial{\bv}}{\partial{\bn}} + \gamma_1 (\nabla\times{\bu}){\btau} + \gamma_2P({\bu})\right)\right]{\bth}\cdot\bn\du s\\
	& - \frac{\gamma_1}{2} \int_{\Omega}\dive(j_1({\bu}){\bth})\du x - \gamma_2\int_{\Omega}\dive(j_2({\bu}){\bth})\du x.
\end{align*}
Therefore, from the assumed regularity of the domain, and from the divergence theorem
\begin{align*}
	d\mathcal{G}((\bu,p),\Omega){\bth}  =&\,  \int_{\Gamma_{\rm f}}\left[\alpha\kappa + \frac{\partial{\bu}}{\partial{\bn}}\cdot\left(\frac{\partial{\bv}}{\partial{\bn}} + \gamma_1(\nabla\times{\bu}){\btau} + \gamma_2P({\bu})\right)\right]{\bth}\cdot\bn\du s\\
	& - \frac{\gamma_1}{2} \int_{\Gamma_{\rm f}}j_1({\bu}){\bth}\cdot{\bn}\du s - \gamma_2\int_{\Gamma_{\rm f}}j_2({\bu}){\bth}\cdot{\bn}\du s.
\end{align*}
\hfill

\begin{remark}
	The resulting form of the shape derivative of $\mathcal{G}$ coincides with the Zolesio--Hadamard {\it Structure Theorem} \cite[Corollary 9.3.1]{zolesio2011}, i.e., we were able to write 
	\[ d\mathcal{G}((\bu,p),\Omega){\bth} = \int_{\Gamma_{\rm f}} \nabla{G}({\bth}\cdot{\bn}) \du s.\]
	In this case, we shall call $\nabla G$ the {\it shape gradient} of $\mathcal{G}$ which will be useful in the numerical implementations that we shall illustrate in the proceeding sections.
	\label{rem:gradient}
\end{remark}

\section{Numerical Realization}\label{sec 5}
In this section we shall use the recently formulated necessary condition to solve the minimization problem numerically.  However, the said condition does not take into account the volume constraint. With this reason, we shall resort to two approaches, one is the {\it augmented Lagrangian} method, which will be based on the implementation of Dapogny, C., et.al.\cite{dapogny2018}, and the other one is by generating divergence-free velocity fields.  

\subsection{Augmented Lagrangian Method}

By writing the shape optimization problem \eqref{shapeopti} as the equality constrained optimization
\begin{align*}
&\min_{\Omega\in \Oad} \mathcal{J}(\Omega)
\text{ subject to } \mathcal{F}(\Omega) := |\Omega|-m=0,
\end{align*}
we consider minimizing the augmented Lagrangian 
\[\mathcal{L}(\Omega,\ell,b) : = \mathcal{J}(\Omega) - \ell \mathcal{F}(\Omega) + \frac{b}{2}(\mathcal{F}(\Omega))^2.\]
In this definition, $\ell$ corresponds to the Lagrange multiplier with respect to the equality constraint, while $b>0$ is a penalty parameter. For a more detailed discussion on augmented Lagrangian methods one can refer to \cite[Section 17.3]{nocedal2006}, and to \cite{dapogny2018} for implementation to shape optimization.

We can solve the shape gradient of $\mathcal{L}$ at a given domain $\Omega\in \Oad$ - which we shall denote as $\nabla L$ - by utilizing the shape gradient of $\mathcal{J}$ and Lemma \ref{hadamard}. This yields
\[ \nabla L = \nabla G - \ell + b(|\Omega| - m), \]
where the term $|\Omega| - m$ is evaluated as $\int_{\Omega}\du x - m.$

\subsection{Gradient Descent and Normal Vector Regularization}

Similarly with Remark \ref{rem:gradient}, one can write the shape derivative of $\mathcal{L}$ as
\[ d\mathcal{L}(\Omega,\ell,b){\bth} = \int_{\Gamma_{\rm f}} \nabla L({\bth}\cdot{\bn})\du s. \]
This gives us an intuitive form of the velocity field as either $\bth = -\nabla G{\bn}$ (or $\bth = -\nabla L{\bn}$ for the augmented Lagrangian), which implies that 
\[ d\mathcal{G}(\Omega,\ell,b){\bth} = -\|\nabla G\|^2_{L^2(\Gamma_{\rm f})} \ (\text{or }d\mathcal{L}(\Omega,\ell,b){\bth} = -\|\nabla L\|^2_{L^2(\Gamma_{\rm f})} )\]
However, such choice of $\bth$ may lead to unwanted behavior of the deformed domain especially for iterative schemes.  To circumvent this issue, several authors utilized extensions of the velocity field in the domain $\Omega$. In \cite{dapogny2018},  the authors considered an elasticity extension with the tangential gradient for solving the field. Rabago et.al. \cite{rabago2019} on the other hand cosidered a Neumann-type extension for solving the mentioned velocity field. In our context we shall consider two approaches based on $H^1$ gradient method \cite{azegami2017}. In particular, for sufficiently small parameter $\gamma>0$, and $\lambda\in\{0,1\}$ we shall use the following $H^1$ gradient method: {\it Solve for $(\bth,\vartheta)\in H^1(\Omega)^2\times L^2(\Omega)$ ($\bth\in H^1(\Omega)^2$ if $\lambda = 0$) that satisfies the variational equation}
\begin{align}
	\left\{
		\begin{aligned}
			\gamma a({\bth},{\bphi})_{\Omega} + ({\bth},{\bphi})_{\Omega} + \lambda b(\vartheta,{\bphi})_{\Omega} &= -(\nabla K{\bn},{\bphi})_{\Gamma_{\rm f}}\\
			\lambda b(\psi,{\bth})_{\Omega} & = 0
		\end{aligned}
	\right.
	\label{velsolve2}
\end{align}
{\it for all $(\bphi,\psi)\in H^1(\Omega)^2\times L^2(\Omega)$ ($\bphi\in H^1(\Omega)^2$ if $\lambda = 0$). } Here $\nabla K$ is $\nabla G$ if $\lambda = 1$, and $\nabla L$ if $\lambda = 0$.

Note that when $\lambda = 0$, \eqref{velsolve2} reduces to the usual $H^1$ method, while the method with $\lambda = 1$ is motivated by the fact that the solution $\bth$ satisfies the divergence-free property, and thus the preservation of volume.

Since we consider domains which are $C^{1,1}$-regular, we can solve the mean curvature as $\kappa = \dive_{\Gamma_{\rm f}}{\bn} = \dive{\bN}$ \cite[Proposition 5.4.8]{henrot2018}, where $\bN$ is any extension of $\bn$ in $\Omega$. Having this in mind, we solve for $\bN\in H^1(\Omega)^2$ by means of the equation
\begin{align}
	\varepsilon a({\bN},{\bphi})_{\Omega} + ({\bN},{\bphi})_{\Gamma_{\rm f}} = ({\bn},{\bphi})_{\Gamma_{\rm f}} \ for\ all\ \bphi\in H^1(\Omega)^2.
	\label{normalextension}
\end{align} 
for sufficiently small $\varepsilon>0.$

\subsection{Deformation Algorithm}
Given an initial domain $\Omega_0$, we shall solve the minimization problem iteratively. In particular, we shall update an iterate $\Omega_k$ by the identity perturbation operator, i.e., $\Omega_{k+1} = (I + t_k{\bth}_k)\Omega_k$.  Here, ${\bth}_{k}$ is solved using \eqref{velsolve2} but on $\Omega_k$ instead of $\Omega$. On the other hand, we shall solve for $t_k$ using a line-search method starting at an initial guess
\begin{equation} 
t_k^0 = \beta\frac{\mathcal{J}(\Omega_k)}{\|\bth_k\|_{L^2(\Gamma_{\rm f})^2}^2}\text{ or }\beta\frac{\mathcal{L}(\Omega_k)}{\|\bth_k\|_{L^2(\Gamma_{\rm f})^2}^2}
\label{stepsize}
\end{equation}
where $\beta>0$ is a fixed, sufficiently small parameter.  This step size is updated whenever $\mathcal{J}(\Omega_{k+1}) > \mathcal{J}(\Omega_k)$ (or $\mathcal{L}(\Omega_{k+1}) > \mathcal{L}(\Omega_k)$) or if the deformed domain $\Omega_{k+1}$ exhibits mesh degeneracy (see \cite[Section 3.4]{dapogny2018} for more details).

As for the augmented Lagrangian $\mathcal{L}$, the multipliers $\ell$ and $b$ are iteratively updated as well. Inspired by Framework 17.3 in \cite{nocedal2006} for the $\ell$-update, and by \cite[Section 3.3]{dapogny2018}, we use the following rule
\begin{align}
	\ell_{n+1} = \ell_n - b_n\mathcal{F}(\Omega_k),\text{ and }b_{n+1}=\left\{\begin{aligned} \tau b_n &\text{ if }b_n<\overline{b}\\ b_n&\text{ otherwise,}\end{aligned}\right.
	\label{multiplier}
\end{align}
where $\overline{b}>0$ is a given target parameter and $\tau>1$ is an increase-iteration parameter.  One may refer to \cite[Section 3.3]{dapogny2018} for a more detailed rationale on the mentioned updates.

In summary, we have the following algorithm:\\
\begin{algorithm}[H]
  Initialize: $\Omega_0$, $\beta$, $\tau$, $\gamma$, $\ell_0$, and $b_0$\;
  Determine the mean curvature $\kappa = \dive \bN$ using \eqref{normalextension}\;
  Solve the respective states $({\bu}_0,p_0)$ and $({\bv}_0,\pi_0)$ from \eqref{operator} and  \eqref{wkadjoint} on $\Omega_0$\;
 \For{k=1,...,max iteration}{
 Solve the value of the objective function $\mathcal{J}(\Omega_k)$ ( or the Lagrangian $\mathcal{L}(\Omega_k)$)\;
 Solve the the deformation field ${\bth}_k\,$ from \eqref{velsolve2} on $\Omega_k$\;
 Set $\Omega_{k+1} = (Id + t_k{\bth}_k\,)\Omega_k$, given that $t_k$ is obtained from \eqref{stepsize} and if  $\mathcal{J}(\Omega_{k+1})<\mathcal{J}(\Omega_{k})$( or $\mathcal{L}(\Omega_{k+1})<\mathcal{L}(\Omega_{k})$ )\;
 Determine new mean curvature $\kappa$\;
 Solve new solutions $(\bu_{k+1},p_{k+1})$ and $(\bv_{k+1},q_{k+1})$\;
 \eIf{$|\mathcal{J}(\Omega_{k+1}) - \mathcal{J}(\Omega_{k+1})\text{( or }|\mathcal{L}(\Omega_{k+1}) - \mathcal{L}(\Omega_{k+1})| \text{)} < $tolerance}
 {break\;}
 {continue\;}
 Update $\ell_{k+1}$ and $b_{k+1}$ using \eqref{multiplier}\;
 }
 \caption{{\bf{\it aL}-algorithm} if $\lambda = 0$, {\bf{\it dF}-algorithm} if $\lambda = 1$}
\end{algorithm}

%

\subsection{Numerical Examples}
Using the algorithms discussed previously, we will now show some numerically implemented examples, which are all run using the programming software FreeFem++ \cite{hecht2012} on a  Intel Core i7 CPU $@$ 3.80 GHz with 64GB RAM. 

In all these examples, we shall consider a Poiseuille-like input function given by $\bg =(1.2(0.25 - x_2^2),0)^\top$. Furthermore, we consider the rectangular channel whose corners are $(0,-0.5)$, $(0,0.5)$, $(2,0.5)$, and $(2,-0.5)$. On the other hand we consider a circular initial shape for the free-boundary $\Gamma_{\rm f}$ that is centered at $(0.325,0)$ and with radius $r = 0.13$ (see Figure \ref{initialshape}). For simplicity, we shall also consider a fairly viscous fluid, i.e., $\nu = 1/100.$
\begin{figure}[h!]
 \centering
  \includegraphics[width=1\textwidth]{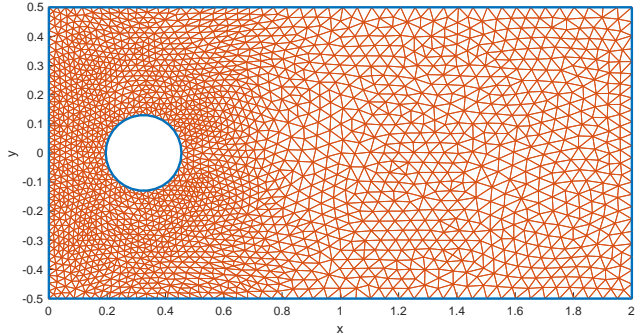}\vspace{-.1in}
 \caption{Initial geometry of the domain with the refined mesh.}
 \label{initialshape}
 \end{figure}

The volume constant $m$ is set as the volume of the initial domain $\Omega_0$, which is refined according to the magnitude of the state solution with the minimum mesh size $hmin = 1/50$ and a maximum mesh size $hmax = 1/30$. All the governing equations -- the state equation, adjoint equation, state responsible for the deformation fields, and the smooth approximation of the normal vector -- are solved using the UMFPACK option. The tolerance, furthermore, is selected to be $tol=1\times10^{-6}.$

\subsubsection{Augmented Lagrangian Method}
The implementation of {\bf{\it aL}-algorithm} is divided into two parts. Namely, we consider when $\gamma_1 = 1$ and $\gamma_2 = 0$ (which we call as the {\it curl}$_{aL}$-problem), $\gamma_1 = 0$ and $\gamma_2 = 1$ (which we call as the {\it detgrad}$_{aL}$-problem).

The values of the constants $\alpha$, $\ell_0$, $b_0$, $\tau$, and $\overline{b}$ are chosen so as to maintain stability of the method.\\

\noindent{\bf Implementation of {\it curl}$_{aL}$-problem} ($\gamma_1 = 1,\gamma_2=0$). For this implementation, we have chosen the numerical parameters shown in Table \ref{param:curlaL}.
\begin{table}[h!]
\caption{Parameter values for {\it curl}$_{aL}$-problem}
\centering
\begin{tabular}{|c c | c c|}
\hline\hline
Parameter 		& 		Value 						& 		Parameter 	& 		Value\\[0.5ex]
\hline\hline
$\alpha$			&		6.0							&		$\ell_0$		&		20\\
$b_0$				& 		$1\times10^{-4}$		&		$\tau$			&		1.05\\
$\overline{b}$	&		10						& &\\
\hline
\end{tabular}
\label{param:curlaL}
\end{table}

Using these parameters, the evolution of the free-boundary is shown in Figure \ref{auglcurlevo}({\it left}) below. We see that the free-boundary evolves into a bean-shaped surface.
\begin{figure}[h!]
 \hspace{-.5in}
  \includegraphics[width=.9\textwidth]{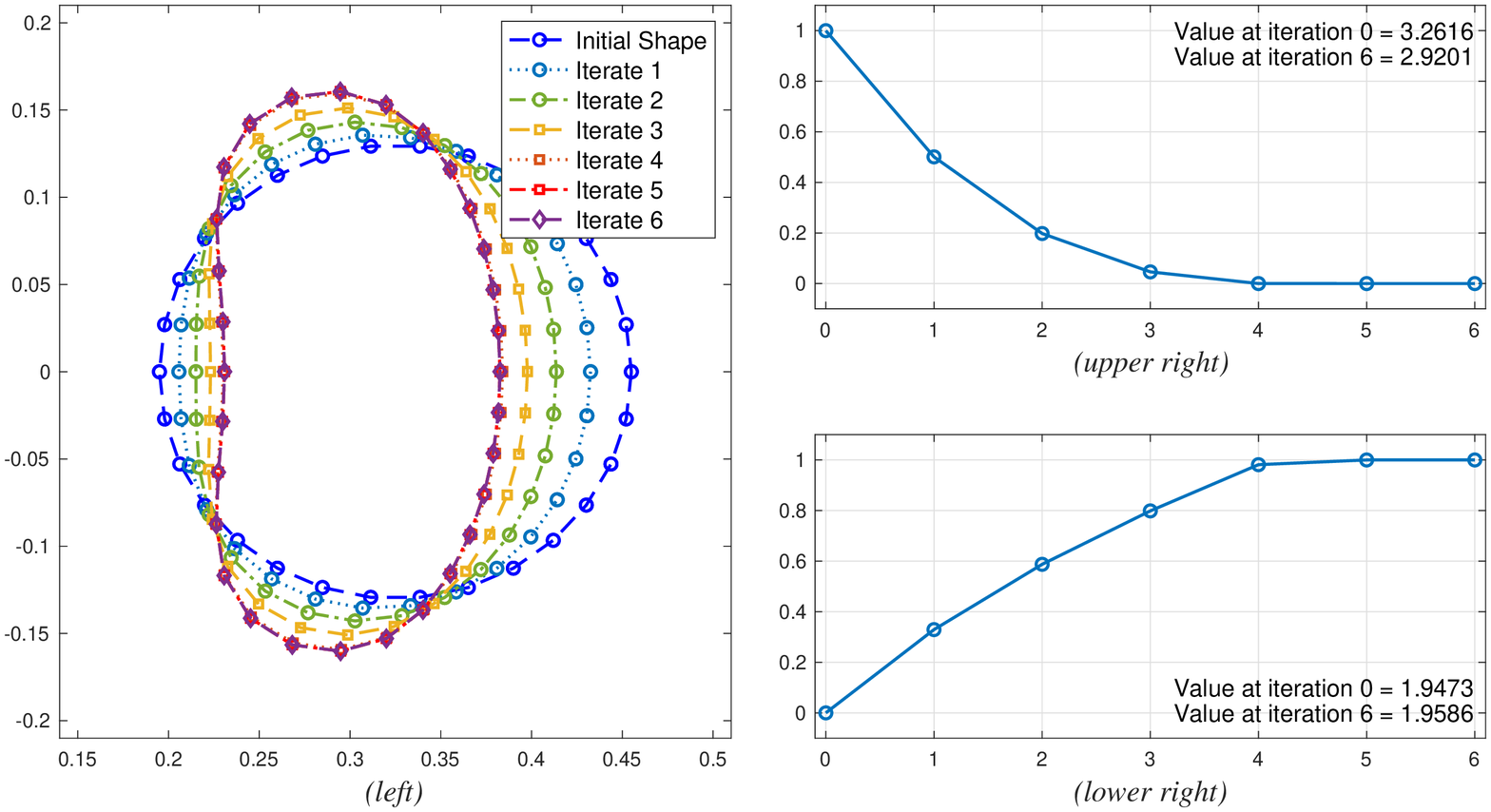}
 \caption{From {\it curl}$_{aL}$-problem, the figure features the following: ({\it left}) Evolution of the free-boundary $\Gamma_{\rm f}$, ({\it upper right}) Normalized trend of the objective functional, ({\it lower right}) Normalized trend of the volume}
 \label{auglcurlevo}
 \end{figure}
As expected, we see that the value of the objective functional decreases on each iteration (see Figure \ref{auglcurlevo}({\it upper right})). In fact, the value of the objective functional decreases by $10.47\%$. Lastly, we see from Figure \ref{auglcurlevo}({\it lower right}) that the volume increases by $0.58\%$.\\

\noindent{\bf Implementation of {\it detgrad}$_{aL}$-problem} ($\gamma_1= 0,\gamma_2 = 1$).  In this particular scenario, we consider the parameter values shown in Table \ref{param:detgradaL}.
\begin{table}[h!]
\caption{Parameter values for {\it detgrad}$_{aL}$-problem}
\centering
\begin{tabular}{|c c | c c|}
\hline\hline
Parameter 		& 		Value 						& 		Parameter 	& 		Value\\[0.5ex]
\hline\hline
$\alpha$			&		1.3							&		$\ell_0$		&		.5\\
$b_0$				& 		$1\times10^{-2}$		&		$\tau$			&		1.05\\
$\overline{b}$	&		10						& &\\
\hline
\end{tabular}
\label{param:detgradaL}
\end{table}

Unlike the {\it curl}$_{aL}$-problem, the shape does not evolve into a bean-shaped obstacle. The evolution acts in a non-symmetrical manner (see Figure \ref{augldetgradevo}({\it left})) due to the fact that the right hand-side of the adjoint equation with the current choice of $\gamma_1$ and $\gamma_2$ is also non-symmetric.  
Even though this problem runs seven iterations, it can be observed that after two iterations, the shapes are {\it almost} identical as shown in Figure \ref{augldetgradevo}({\it left}).  This phenomena can also be observed on the trend of the decrease of the value of the objective functional, i.e.,  the values of the functional from the second iteration up to the last iteration differ minimally. 
\begin{figure}[h!]
 \hspace{-.5in}
  \includegraphics[width=.9\textwidth]{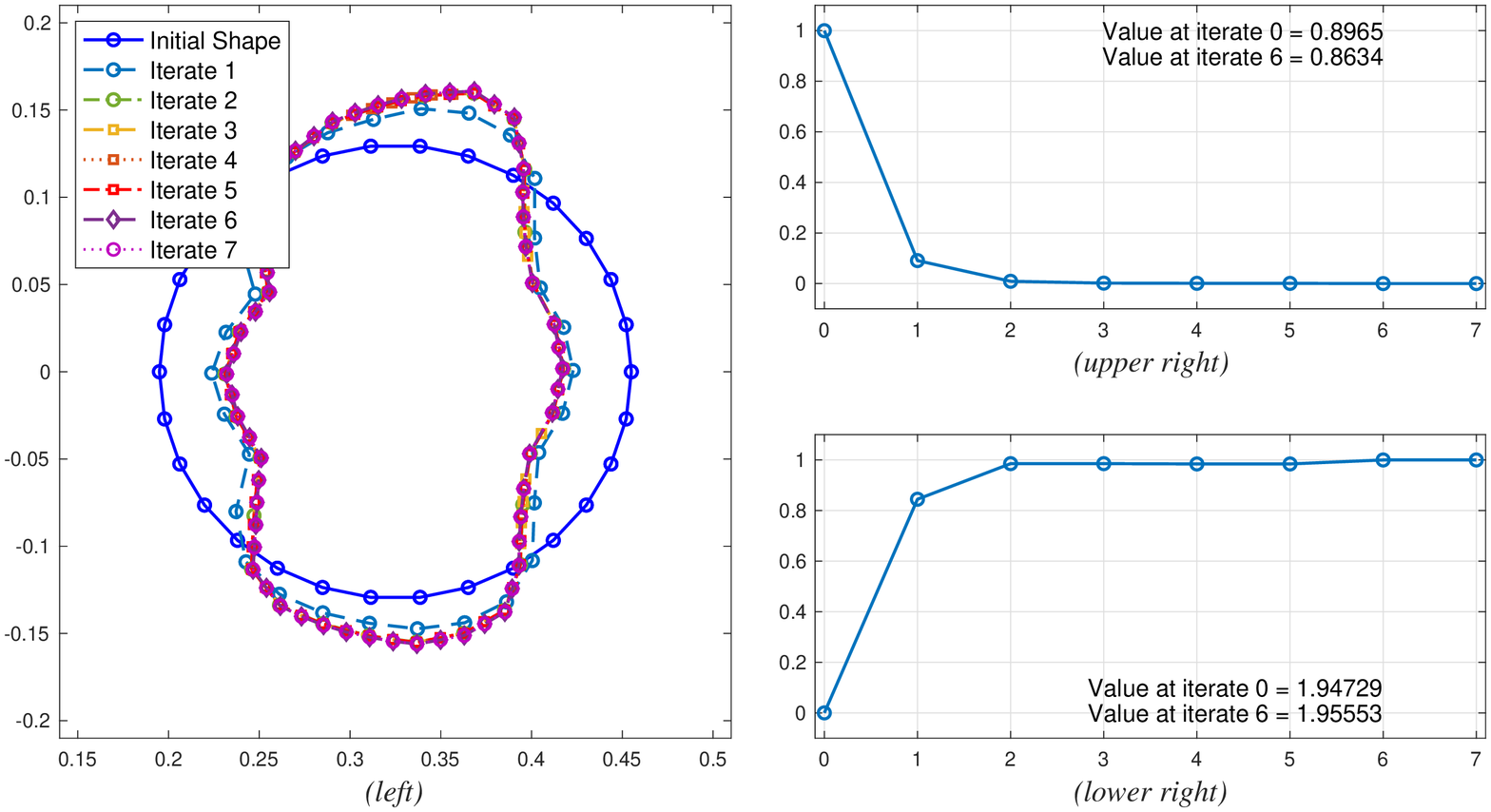}
 \caption{From {\it detgrad}$_{aL}$-problem, the figure features the following: ({\it left}) Evolution of the free-boundary $\Gamma_{\rm f}$, ({\it upper right}) Normalized trend of the objective functional, ({\it lower right}) Normalized trend of the volume}
 \label{augldetgradevo}
 \end{figure}
 
 The percentage change from the initial shape to the second iterate-shape is $3.67\%$, while from the second iterate to the final shape it is $0.03\%$. In general, from the initial shape to the final shape the percentage difference is $3.7\%.$ We also point out that the minute difference of iterate 2 and the final shape may be caused by the fact that the changes are caused by mesh adaptation used in our program.

\subsubsection{Divergence-free deformation fields}

Unlike the implementation of {\bf{\it aL}-algorithm}, the minimization problem is solved by {\bf{\it dF}-algorithm} in three parts. In particular, we shall illustrate the following modificatoins:
\begin{enumerate}
	\item[$\bullet$] {\it curl}$_{dF}$-problem with parameters $\gamma_1 = 1$, and $\gamma_2 = 0$;
	\item[$\bullet$] {\it detgrad}$_{dF}$-problem with parameters $\gamma_1 = 0$, and $\gamma_2 = 1$;
	\item[$\bullet$] {\it mixed}$_{dF}$-problem with parameters $\gamma_1 ,\gamma_2 \ge 1$.
\end{enumerate}

\noindent{\bf Implementation of {\it curl}$_{dF}$-problem} ($\gamma_1 = 1, \gamma_2 = 0$). 
Note that in using {\bf{\it dF}-algorithm}, we are more relaxed in the choice of the parameters. In particular, we are just concerned with the parameter $\alpha$ which we choose as $\alpha = 5$.

As we can see in Figure \ref{divfreecurlevo}({\it left}) the method converges to a bean shaped boundary, just like the generated shape in {\it curl}$_{aL}$-problem. However, in the current case, the method steers the shape to have the inner domain bounded by $\Gamma_{\rm f}$ lose its convexity on the left side.
\begin{figure}[h!]
 \hspace{-.5in}
  \includegraphics[width=.9\textwidth]{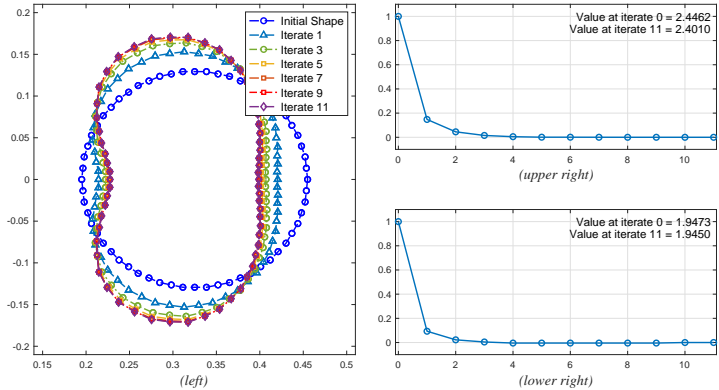}
 \caption{From {\it curl}$_{dF}$-problem, the figure features the following: ({\it left}) Evolution of the free-boundary $\Gamma_{\rm f}$, ({\it upper right}) Normalized trend of the objective functional, ({\it lower right}) Normalized trend of the volume}
 \label{divfreecurlevo}
 \end{figure}
One can also infer from Figure \ref{divfreecurlevo}({\it upper right}) that the relative difference - with respect to the initial shape - of the value of the objective function at the optimal shape is $1.85\%$, while of the volume is $0.12\%$. 

If we compare this algorithm with {\bf{\it aL}-algorithm} with $\alpha = 5$ and $\ell_0 = 13,13.5,14$, we deduce from Figure \ref{curlcompdivfwaugl}({\it lower right}) that our current method preserves the volume better. We also see from Figure \ref{curlcompdivfwaugl}({\it upper right})) that the objective function values using {\bf{\it dF}-algorithm} are lower than the ones from {\bf{\it aL}-algorithm} except from the case $\ell_0=14$, which fails horribly with volume preservation.
\begin{figure}[h!]
 \hspace{-.5in}
  \includegraphics[width=.9\textwidth]{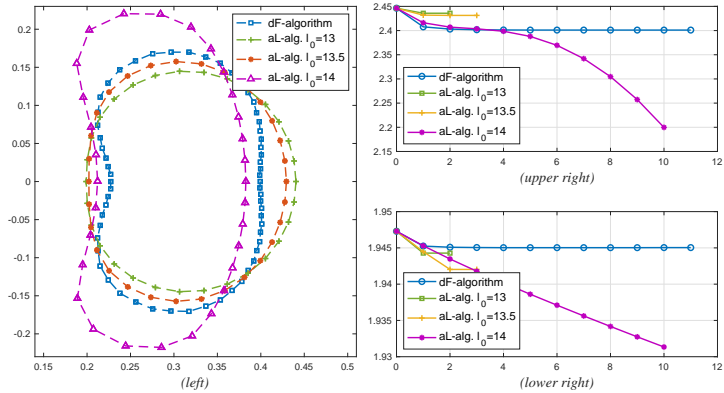}
 \caption{({\it left}) Comparison of final shapes between {\bf{\it aL}-algorithm} and {\bf{\it dF}-algorithm} for the problem with the parameters $\gamma_1 = 1$, $\gamma_2 = 0$ and $\alpha = 5$, ({\it upper right}) Comparison of objective value trends between {\bf{\it aL}-algorithm} and {\bf{\it dF}-algorithm}, ({\it lower right}) Comparison of volume trends between {\bf{\it aL}-algorithm} and {\bf{\it dF}-algorithm}}
 \label{curlcompdivfwaugl}
 \end{figure}

We also see in Figure \ref{curlcompdivfwaugl}({\it left}) what we meant we we said that the inner domain bounded by $\Gamma_{\rm f}$ loses its convexity on the right side, i.e., the curvature of the shapes generated by {\bf{\it aL}-algorithm} are more pronounced compared to our current algorithm.

\begin{remark}
(i.) The reason why we started the simulation of {\bf{\it aL}-algorithm} with $\alpha=5$ at $\ell_0 = 13$ is that if we try a smaller value of $\ell_0$, the scheme becomes too stringent and will not generate any domain perturbation.

\noindent(ii.) One interesting feature in Figure \ref{curlcompdivfwaugl} is that even after ten iterations, the objective function and volume trends for the case $\ell_0=14$ seem to have not converged yet. In fact, even after 20 iterations, the scheme still finds a descent direction. This highlights an issue with the balance between regularization and stability, which is an interesting venture to delve into if one is keen on studying stability of numerical methods. One may also refer to \cite{nair1997} for some insights on the issue of regularity and stability in general.
\end{remark}

\noindent{\bf Implementation of {\it detgrad}$_{dF}$-problem} ($\gamma_1 = 0, \gamma_2 = 1$). Similar with our previous implementation, we only needed to choose a value for the parameter $\alpha$, which in this case is $\alpha = 1.0.$

As illustrated in Figure \ref{divfreedetgradevo}({\it left}), the final perturbed domain differs minimally from the initial domain. Furthermore,  we can see from Figure \ref{detgradcompdivfwaugl}({\it left}) that the deformation is less extensive as compared to the final shapes generated by the {{\it aL}-algorithm}.  One can also infer from Figure \ref{divfreedetgradevo}({\it upper} and {\it lower right}) that the value of the objective function and of the volume at the final shape differs relatively (w.r.t. the initial shape) by $1.30\%$ and $0.03\%$, respectively. This implies that the volume preservation is significantly {\it satisfied} by the {\bf{\it dF}-algorithm}, while maintaining a significant decrease on the objective function.
\begin{figure}[h!]
 \hspace{-.5in}
  \includegraphics[width=.9\textwidth]{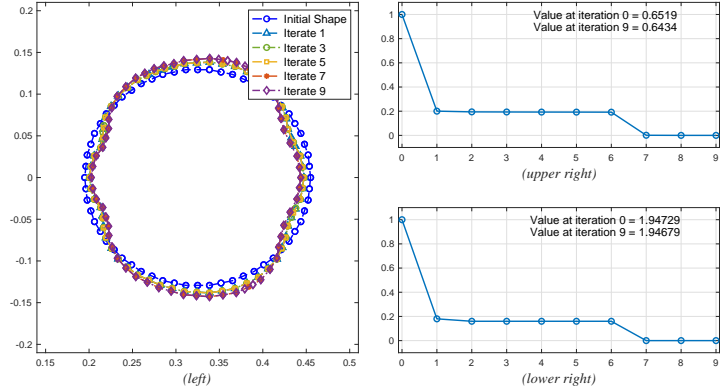}
 \caption{From {\it detgrad}$_{dF}$-problem, the figure features the following: ({\it left}) Evolution of the free-boundary $\Gamma_{\rm f}$, ({\it upper right}) Normalized trend of the objective functional, ({\it lower right}) Normalized trend of the volume}
 \label{divfreedetgradevo}
 \end{figure}

If we compare the current algorithm to {\bf{\it aL}-algorithm} with the same value of $\alpha$, we see from Figure \ref{detgradcompdivfwaugl}({\it lower right}) that the only value of $\ell_0$ that can preserve the initial volume better is $\ell_0=0.1$. We also see from Figure \ref{detgradcompdivfwaugl}({\it upper right}) that since the difference between the final value of the objective function at the final shape using {\bf{\it dF}-algorithm} and {\bf{\it aL}-algorithm} with $\ell_0=0.1$ is very minimal,  the {\bf{\it aL}-algorithm} is more preferable. The only drawback though is that {\bf{\it aL}-algorithm} depends on more parameters, hence it is more subject to instabilities.
\begin{figure}[h!]
 \hspace{-.5in}
  \includegraphics[width=.9\textwidth]{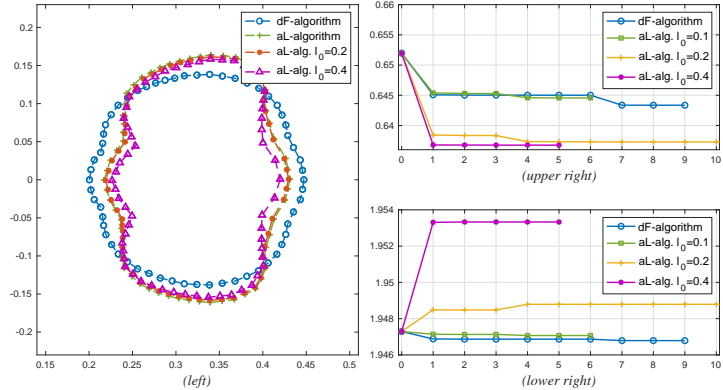}
 \caption{({\it left}) Comparison of final shapes between {\bf{\it aL}-algorithm} and {\bf{\it dF}-algorithm}  with the parameters $\gamma_1 = 0$, $\gamma_2 = 1$, and $\alpha = 1$, ({\it upper right}) Comparison of objective value trends between {\bf{\it aL}-algorithm} and {\bf{\it dF}-algorithm}, ({\it lower right}) Comparison of volume trends between {\bf{\it aL}-algorithm} and {\bf{\it dF}-algorithm}}
 \label{detgradcompdivfwaugl}
 \end{figure}

The implementations of the {\bf{\it aL}-algorithm} with $\ell_0=0.2,0.4$ exhibit a significant decrease on the objective functional as compared to the {\bf{\it dF}-algorithm}, however they both violate the volume constraint significantly.

To illustrate another case where the {\bf{\it aL}-algorithm} is more preferable for the minimization of the {\it detgrad} functional,  Figure \ref{detgradcompdivfwaugl2} shows an implementation where $\alpha = 1.2$, and $\ell_0 = 0.1$ for the {\bf{\it aL}-algorithm}.
\begin{figure}[h!]
 \hspace{-.5in}
  \includegraphics[width=.9\textwidth]{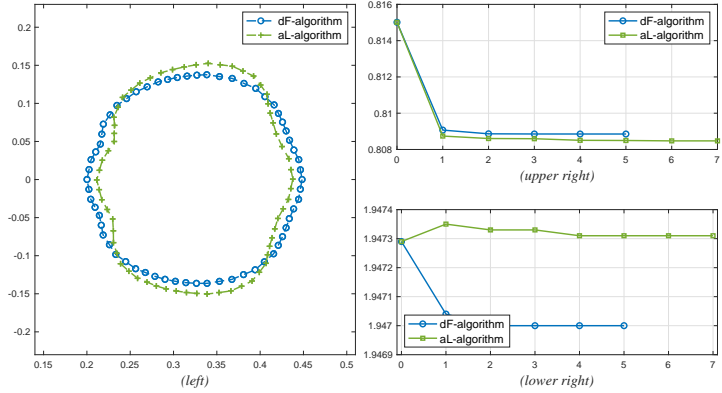}
 \caption{({\it left}) Comparison of final shapes between {\bf{\it aL}-algorithm} and {\bf{\it dF}-algorithm} with the parameters $\gamma_1 = 0$, $\gamma_2 = 1$, and $\alpha = 1.2$, ({\it upper right}) Comparison of objective value trends between {\bf{\it aL}-algorithm} and {\bf{\it dF}-algorithm}, ({\it lower right}) Comparison of volume trends between {\bf{\it aL}-algorithm} and {\bf{\it dF}-algorithm}}
 \label{detgradcompdivfwaugl2}
 \end{figure}

As can be easily observed in Figure \ref{detgradcompdivfwaugl2}({\it right}), {\bf{\it aL}-algorithm} yields better result when it comes to the minimization of the objective functional and preservation of the volume. In particular, the objective functional values are much lower for {\bf{\it aL}-algorithm}, and the volume difference between the initial and final shape are much closer compared to the values generated by {\bf{\it dF}-algorithm}.\\

\noindent{\bf Implementation of {\it mixed}$_{dF}$-problem} ($\gamma_1,\gamma_2\ge 1$). In this implementation, we consider several configurations for reasons we shall explain later.  In particular,  we shall consider the value of parameters as shown in Table \ref{divfmixedsetup}.
 \begin{table}[h!]
\caption{Parameter Values for {\it mixed}$_{dF}$-problem}
\centering
\begin{tabular}{|c| c  c c|}
\hline\hline
Configuration 		& 		$\alpha$ 					& 		$\gamma_1$ 	& $\gamma_2$ \\[0.5ex]
\hline\hline
1							&		6.0							&		1.0					&		1.0\\
2							&		7.0							&		1.0					&		2.0\\
3							&		8.0							&		1.0					&		3.0\\
4							&		9.0							&		1.0					&		4.0\\
5							&		10.0							&		1.0					&		5.0\\
6							&		11.0							&		1.0					&		6.0\\
7							&		12.0							&		1.0					&		7.0\\
8							&		13.0							&		1.0					&		8.0\\
9							&		14.0							&		1.0					&		9.0\\
10							&		15.0							&		1.0					&		10.0\\
\hline
\end{tabular}
\label{divfmixedsetup}
\end{table}

For the first configuration,  Figure \ref{divfreemixedevo} shows how the initial shape evolves into a bean shaped boundary similar with that of {\it curl}$_{dF}$-problem. Furthermore, the value of the objective functional at the final shape differs relatively from the initial shape by $0.87\%$ while the volume is preserved with a relative difference of $0.30\%$.
\begin{figure}[h!]
\hspace{-.5in}
  \includegraphics[width=.9\textwidth]{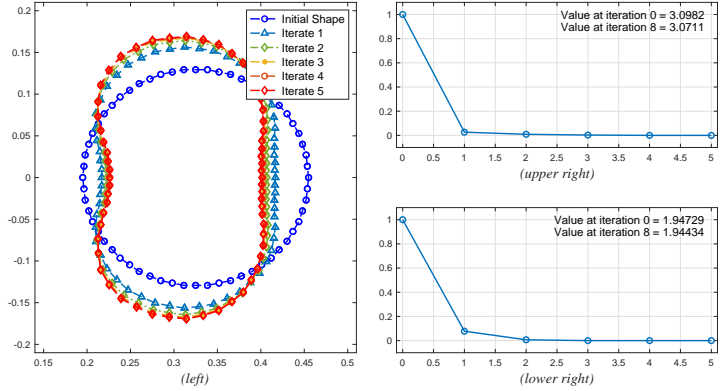}
 \caption{From configuration 1 of {\it mixed}$_{dF}$-problem, the figure features the following: ({\it left}) Evolution of the free-boundary $\Gamma_{\rm f}$, ({\it upper right}) Normalized trend of the objective functional, ({\it lower right}) Normalized trend of the volume}
 \label{divfreemixedevo}
 \end{figure}

We note that in this configuration the final shape is almost the same with the final shape generated by {\it curl}$_{dF}$-problem, as shown in Figure \ref{divfreemixedparted}({\it left}). 
\begin{figure}[h!]
 \hspace{-.5in}
  \includegraphics[width=.9\textwidth]{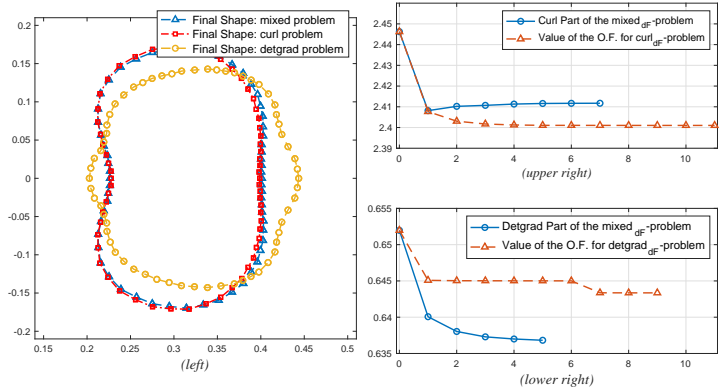}
 \caption{({\it left}) Plots of the solutions of {\it mixed}$_{dF}$-problem (using configuration 1),  of {\it curl}$_{dF}$-problem and of {\it detgrad}$_{dF}$-problem generated boundaries, ({\it upper right}) Comparison of the objective function value of the {\it curl}$_{df}$-problem and the {\it curl} part of the {\it mixed}$_{dF}$-problem, ({\it lower right}) Comparison of the objective function value of the {\it curl}$_{dF}$-problem and the {\it detgrad} part of the {\it mixed}$_{df}$-problem}
 \label{divfreemixedparted}
 \end{figure}
To understand why this phenomena occurs, we note that the objective function in our configurations can be written as
\[ \mathcal{J}(\Omega) = \left[5\int_{\Gamma_{\rm f}}\du s - \frac{1}{2}\int_{\Omega} |\nabla\times {\bu}|\du x \right] + k\left[\int_{\Gamma_{\rm f}}\du s - \int_{\Omega} h(\mathrm{det}(\nabla{\bu})) \du x  \right], \]
where $k$ is the configuration number. We shall call the first bracket as the {\it curl} part of the {\it mixed}$_{df}$-problem while the other one the {\it detgrad} part. Evaluating the two parts numerically at the initial shape, we can see in Figure \ref{divfreemixedparted}({\it right}) that the value of the {\it curl} part is higher than the {\it detgrad} part, with the respective values $2.45$ and $0.65$. This results to having the optimization problem prioritize minimizing the {\it curl} part of the {\it mixed}$_{dF}$-problem, and this results into having the similar shape as that of the {\it curl}$_{dF}$-problem. Intriguingly though, when compared to the values of the objective functional for the {\it curl}$_{dF}$-problem and the {\it detgrad}$_{dF}$-problem, the values of the {\it curl} part on each iterate is much higher than the objective function for {\it curl}$_{dF}$-problem (see Figure \ref{divfreemixedparted}({\it upper right})) while the values of the {\it detgrad} part is lower than the objective function for {\it detgrad}$_{dF}$-problem (see Figure \ref{divfreemixedparted}({\it lower right})).

To investigate the influence of the {\it curl}$_{dF}$-problem and the {\it detgrad}$_{dF}$-problem to the solution obtained by {\it mixed}$_{dF}$-problem even further, we implement configuration 2. In this example, the value of the {\it detgrad} part is higher than the one considered on the previous configuration.

\begin{figure}[h!]
 \hspace{-.5in}
  \includegraphics[width=.9\textwidth]{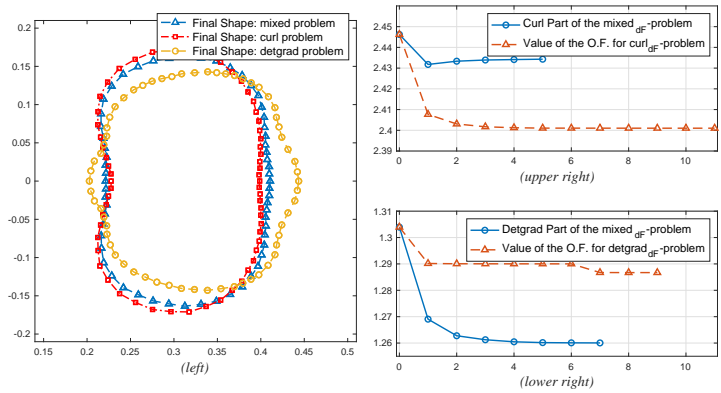}
 \caption{({\it left}) Plots of the solutions of {\it mixed}$_{dF}$-problem (using configuration 2),  of {\it curl}$_{dF}$-problem and of {\it detgrad}$_{dF}$-problem generated boundaries, ({\it upper right}) Comparison of the objective function value of the {\it curl}$_{dF}$-problem and the {\it detgrad} part of the {\it mixed}$_{dF}$-problem, ({\it lower right}) Comparison of the objective function value of the {\it curl}$_{dF}$-problem and the {\it detgrad} part of the {\it mixed}$_{dF}$-problem}
 \label{divfreemixedparted2}
 \end{figure}
As can be observed on Figure \ref{divfreemixedparted2}({\it left}) the  final shape for the {\it mixed}$_{dF}$-problem is now more distinct from that of the {\it curl}$_{dF}$-problem.  In particular,  we now see a bump emerging on the right side of the final shape for {\it mixed}$_{dF}$-problem. The emergence of such bump may be attributed to its presence on the final shape of {\it detgrad}$_{dF}$-problem. In fact, the value of the {\it detgrad} part is now higher compared to the previous configuration (see Figure \ref{divfreemixedparted2}({\it upper right}) and ({\it lower right})).

Skipping to the fourth configuration, we see that value of the {\it detgrad} part is higher than the {\it curl} part.  From Figure \ref{divfreemixedparted3}({\it left}), the final shape of the {\it mixed}$_{df}$-problem is now greatly influenced by the shape of {\it detgrad}$_{df}$-problem. In particular,  it can be observed that the right side of the shape generated by {\it mixed}$_{df}$-problem {\it converges} to the shape induced by the {\it detgrad}$_{df}$-problem. However, we can still observe the effect of the shape by the {\it curl}$_{df}$-problem on the left side. 

\begin{figure}[h!]
 \hspace{-.5in}
  \includegraphics[width=.9\textwidth]{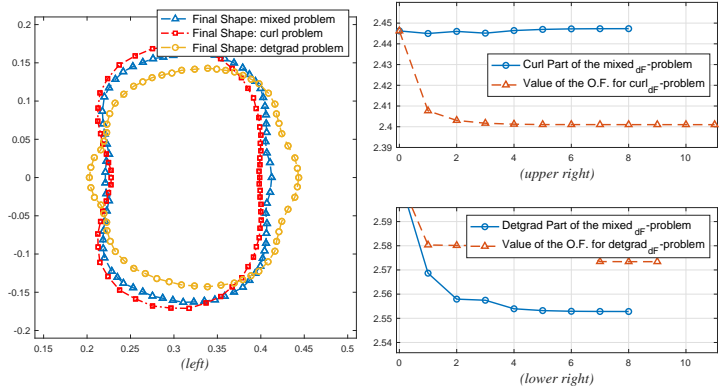}
 \caption{({\it left}) Plots of the solutions of {\it mixed}$_{dF}$-problem (using configuration 4),  of {\it curl}$_{dF}$-problem and of {\it detgrad}$_{dF}$-problem generated boundaries, ({\it upper right}) Comparison of the objective function value of the {\it curl}$_{dF}$-problem and the {\it detgrad} part of the {\it mixed}$_{dF}$-problem, ({\it lower right}) Comparison of the objective function value of the {\it curl}$_{dF}$-problem and the {\it detgrad} part of the {\it mixed}$_{dF}$-problem}
 \label{divfreemixedparted3}
 \end{figure}


To see the convergence more clearly we utilize Hausdorff distance. Figure \ref{hausdorff}({\it upper right}) illustrates the increase of the Hausdorff distance between the solutions of the {\it mixed}$_{dF}$-problem and the {\it curl}$_{dF}$-problem.  Meanwhile, Figure \ref{hausdorff}({\it lower right}) shows a decreasing trend in the Hausdorff distance between the solutions of the {\it mixed}$_{dF}$-problem and the {\it detgrad}$_{dF}$-problem.
\begin{figure}[h!]
 \hspace{-.5in}
  \includegraphics[width=.9\textwidth]{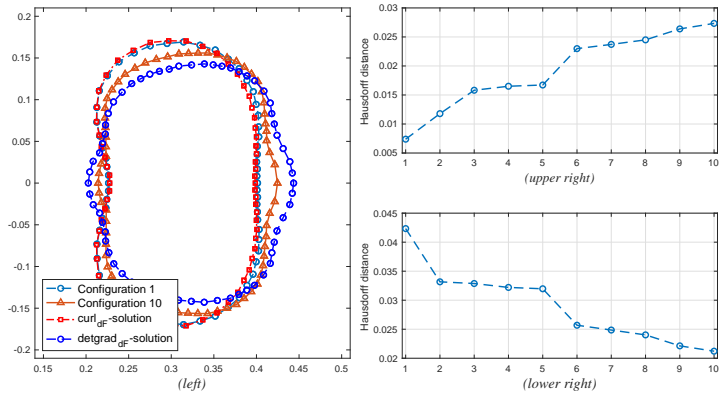}
 \caption{({\it left}) Plots of the solutions of {\it mixed}$_{dF}$-problem (configurations 1 and 10), of {\it curl}$_{dF}$-problem, and of {\it detgrad}$_{dF}$-problem, ({\it upper right}) Hausdorff distance between {\it mixed}$_{dF}$-solution and {\it curl}$_{dF}$-solution on each configuration, ({\it lower right}) Hausdorff distance between {\it mixed}$_{dF}$-solution and {\it detgrad}$_{dF}$-solution on each configuration}
 \label{hausdorff}
 \end{figure}

\subsection{Effects of the Shape Solutions to Generation of Twin-Vortex}

We conclude this section by briefly illustrating the effects of the final shapes to twin-vortices induced by a time-dependent Navier--Stokes equations right before the shedding of Karman vortex, and is solved using a stabilized Lagrange--Galerkin method \cite{notsu2016}. Since similar behaviors can be observed if we use {\bf{\it aL}-algorithm} or {\bf{\it dF}-algorithm}, we only illustrate the cases using {\bf{\it dF}-algorithm}. 

Figure \ref{initxcurl} shows the comparison of the twin-vortices using the initial domain and the domain obtained from {\it curl}$_{dF}$-problem. The figure shows that the solution to the {\it curl}$_{dF}$-problem generates a longer vortex compared to that of the initial domain.

\begin{figure}[h!]
 \centering\vspace{-.1in}
  \includegraphics[width=1\textwidth]{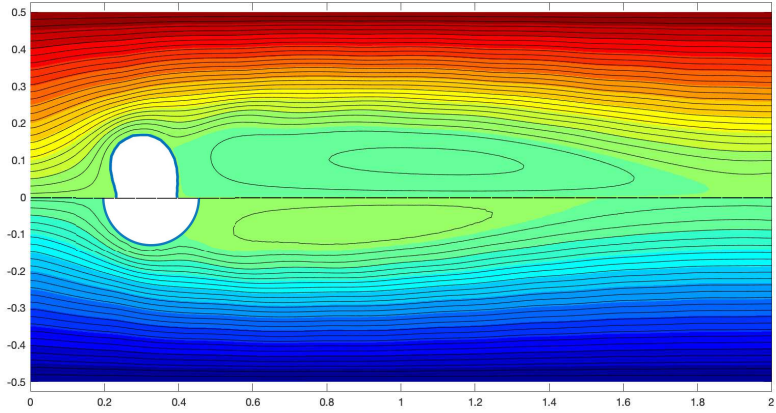}\vspace{-.2in}
 \caption{The figure shows the comparison of the flows using the initial domain (lower part), and the final shape from {\it curl}$_{df}$-problem (upper part)}
 \label{initxcurl}
 \end{figure}

Similarly, the solution of the {\it detgrad}$_{dF}$-problem generates a longer vortex as compared to the initial domain as shown in Figure \ref{initxcurl}.
\begin{figure}[h!]
 \centering\vspace{-.1in}
  \includegraphics[width=1\textwidth]{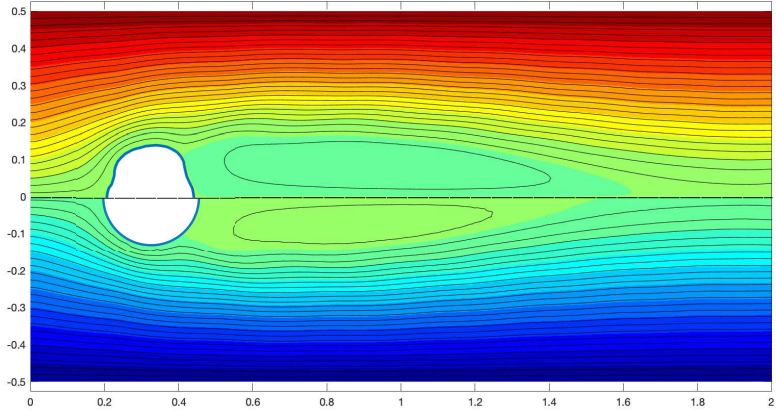}\vspace{-.2in}
 \caption{The figure shows the comparison of the flows using the initial domain (lower part), and the final shape from {\it detgrad}$_{df}$-problem (upper part)}
 \label{initxdetgrad}
 \end{figure}
  
  Lastly, Figure \ref{curlxdetgrad} shows that the vortex generated using the final shape of {\it curl}$_{df}$-problem is longer than that of the {\it detgrad}$_{df}$-problem.
\begin{figure}[h!]
 \centering\vspace{-.1in}
  \includegraphics[width=1\textwidth]{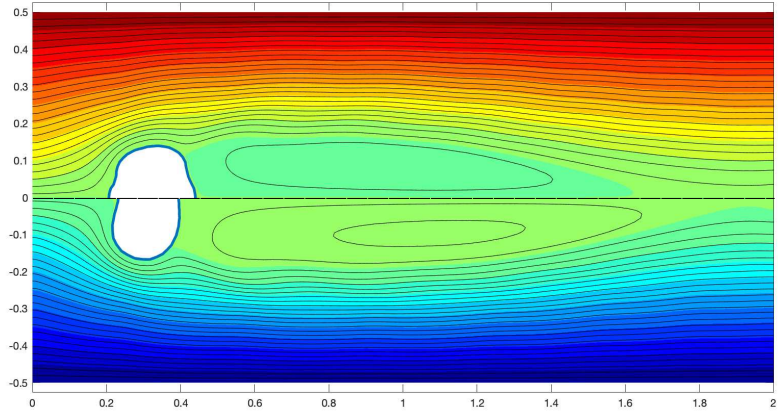}\vspace{-.2in}
 \caption{The figure shows the comparison of the flows using the the final shape from {\it curl}$_{df}$-problem (lower part), and the final shape from {\it detgrad}$_{df}$-problem (upper part)}
 \label{curlxdetgrad}
 \end{figure} 

\begin{remark}
As mentioned before, the generation of vortices gives a possible application of our shape optimization problem to a branch of machine learning known as physical reservoir computing. According to Goto, K., et.al.\cite{goto2021}, the length or the long diameter of the twin-vortices has a correlation with the memory capacity - as defined in the aforementioned reference - of the physical reservoir computer.
With our previous observations, we can {\it naively} conjecture that the shapes generated by the implementations above will cause a better memory capacity for the physical reservoir computer.
 However, since these are just visual observations, we need more experiments to verify that these shapes indeed generate a good computational ability.
 \end{remark}
\section{Conclusion}

We presented a minimization problem that is intended to maximize the vorticity of the fluid governed by the Stokes equations.  Furthermore, we considered a Tikhonov-type regularization - in the form of the perimeter functional - and a volume constraint. 
The shape sensitivity analysis is carried out by utilizing the rearrangement method which gave us the necessary conditions  in the form of the Zolesio--Hadamard Structure. We then implemented the necessary conditions in two ways, one is by utilizing the augmented Lagrangian, and by utilizing a new method for solving the deformation field that possesses the divergence-free condition. 
From these methods, we showed some numerical examples. Lastly, we illustrated how the shape solutions affect the flow, and we briefly mentioned a possible application of this problem in the field of physical reservoir computers.

We end this note by pointing out that the investigation in the stability of the numerical approaches with respect to the parameters were only scratched on the surface and can be explored with more rigor. Furthermore, the visual observations on the emergence of the twin-vortex are not enough to give a good conclusion with regards to the memory capacity of the physical reservoir computer. With this, one can investigate even further and try some more tests as carried out in \cite{goto2021}.


\end{document}